\newtheorem{thm}{Theorem}[section]
\newtheorem{prop}[thm]{Proposition}
\theoremstyle{definition}
\newtheorem{rem}[thm]{Remark}
\newtheorem{ques}[thm]{Question}
\newtheorem{conj}[thm]{Conjecture}
\newtheorem{ex}[thm]{Example}
\newcommand{\R}{\mathbb{R}}
\newcommand{\C}{\mathbb{C}}
\newcommand{\N}{\mathbb{N}}
\newcommand{\Z}{\mathbb{Z}}
\newcommand{\Q}{\mathbb{Q}}
\newcommand{\cal}{\mathcal}
\begin{document}

\title{Binary sequences meet the Fibonacci sequence}

\keywords{binary sequence, automatic sequence, recurrence relation, meta-Fibonacci sequence} \subjclass[2020]{11B37, 11B39, 11B83, 11B85}
\thanks{\textsuperscript{*} Corresponding author \\
E-mail: \url{piotr.miska@uj.edu.pl}, \url{miskap.ujs.sk} (P. Miska),  
\url{bartosz.sobolewski@uj.edu.pl} (B. Sobolewski), \url{maciej.ulas@uj.edu.pl} (M. Ulas)}

\maketitle

    \begin{center}{ \large 
Piotr Miska \textsuperscript{a, b}, Bartosz Sobolewski \textsuperscript{a, *}, Maciej Ulas \textsuperscript{a} }
\end{center}

\vspace{1em}

{\it \small
\textsuperscript{a } Jagiellonian University, Faculty of Mathematics and Computer Science, Institute of
Mathematics, {\L}ojasiewicza 6, 30-348 Krak\'ow, Poland 

\textsuperscript{b } J. Selye University, Faculty of Economics and Informatics, Department of Mathematics
Hradn\'a 167/21, P. O. Box 54, 945 01 Kom\'arno, Slovakia}

\begin{abstract}
We introduce a new family of meta-Fibonacci sequences $(f(n))_{n\in\mathbb{N}}$, governed by the recurrence relation $$f(n)=af(n-u_{n}-1)+bf(n-u_{n}-2),$$ where $\mathbf{u}=(u_{n})_{n\in \mathbb{N}}$ is a sequence with values $0,1$. Our study focuses on the properties of the sequence of quotients $h(n) = f(n+1)/f(n)$ and its set of values $\mathcal{V}(f)=\{h(n): n \in \mathbb{N}\}$ for various $\mathbf{u}$. We give a sufficient condition for finiteness of $\mathcal{V}(f)$ and automaticity of $(h(n))_{n \in \mathbb{N}}$, which holds in particular when $\mathbf{u}$ is the famous Prouhet-Thue-Morse sequence. In the automatic case, a constructive approach is used, with the help of the software \texttt{Walnut}. On the other hand, we prove that the set $\cal{V}(f)$ is infinite for other special binary sequences $\mathbf{u}$, and obtain a trichotomy in its topological type when $\mathbf{u}$ is eventually periodic.
\end{abstract}

\section{Introduction and motivation}\label{sec:intro}

Let $a, b\in\Z$ and recall that by a binary linear recurrence sequence we mean a sequence $(g(n))_{n\in\N}$ satisfying the recurrence relation 
\begin{equation}\label{binlin}
   g(n)=ag(n-1)+bg(n-2), 
\end{equation}
where $g(0), g(1)$ are given. The most extensively studied example is of course the Fibonacci sequence, which arises for $a=b=1$ and $g(0)=0,  g(1)=1$. In the literature there are many variations of binary recurrence sequences, including so- called meta-Fibonacci sequences. What is a meta-Fibonacci sequence? In the most general terms, it is a solution of a recurrence relation of the form
\begin{equation}\label{genmeta}
f(n)=f(r_{1}(n))+f(r_{2}(n)),
\end{equation}
where $r_{1}(n), r_{2}(n)$ are certain expressions involving $n, f(n-1), \ldots, f(n-k)$ and possibly other functions of $n$. We note that the class of sequences defined in this way is much broader than the one introduced by Conolly in \cite{Con}. Special meta-Fibonacci sequences were investigated in \cite{Gol, Hof, Rus}. Probably the most famous one is Hofstadter's $Q$-sequence (see \cite[page 137]{Hof}), defined by $Q(1)=Q(2)=1$ and 
$$
Q(n)=Q(n-Q(n-1))+Q(n-Q(n-2))
$$
for $n\geq 2$. It is an open question whether the sequence is infinite, i.e., whether for each $n\in\N$ one can compute the value of $Q(n)$. 

In the paper, we consider a new variation of binary recurrence sequences, where the indices on the right-hand side of \eqref{binlin} are additionally shifted, depending on a given sequence $\mathbf{u}=(u_n)_{n \in \N}$ with values in the set $\{0, 1\}$. More precisely, we are interested in $(f(n))_{n \in \N}$ satisfying
\begin{equation}\label{fdefinition}
f(n)=af(n-u_{n}-1)+bf(n-u_{n}-2),
\end{equation}
where $a, b\in\Z$. In other words, the values $u_n$ are responsible for switching between a binary and ternary recurrence relation. For the most part, we consider the initial conditions
$$f(0)=1, \quad f(1)=1, \quad f(2)= a+b.$$
This will turn out to be a natural choice in the context of our results --- see Section \ref{sec:notation} for a short discussion.

 The present study arose from the observation that in the case $\mathbf{u}=(T_{n})_{n\in\N}$, the famous Prouhet--Thue--Morse sequence (PTM sequence for short), subsequent terms $f(n)$ have surprisingly many common prime factors, regardless of the parameters $a,b$. Contrast this with usual binary recurrence sequences \eqref{binlin}, where for $a,b,g(0),g(1)$ pairwise coprime one can show inductively that $g(n), g(n+1)$ are coprime for all $n \in \N$.
 This leads us to studying the set of ratios
 $$\cal{V}(f) := \{f(n+1)/f(n): n\in \N\},$$
 which turns out to be finite in the case $\mathbf{u}=(T_{n})_{n\in\N}$, as we prove in Theorem \ref{PTMcase} below. We note that for a binary linear recurrence sequence $(g(n))_{n\in\N}$ the sequence of ratios of consecutive terms is known to be either convergent, periodic, or dense in some circle or line in the complex plane (see \cite{BHP, BK}). It is thus interesting to see what may change in the structure of the set (or sequence) of ratios if we modify the recurrence relation as described. 

Apart from the PTM sequence, we study the more general case when $\mathbf{u}$ belongs to the class of \textit{automatic sequences}, which supplies many natural examples of $0$-$1$ sequences. We recall the definition of automatic sequences and their essential properties in Section \ref{sec:PTM}. A different kind of connection between meta-Fibonacci sequences (more specifically, a relative of Hofstadter's $Q$-sequence) and automatic sequences is investigated in \cite{AS13}.

 Finiteness of $\cal{V}(f)$ turns out to be a direct consequence of the key property that consecutive terms $0,1,0$ occur in the PTM sequence with bounded gaps.  We thus also investigate the case when the sequence $\mathbf{u}$ does not satisfy said property but potentially has some other structure.
 
To conclude this section, we describe the content of the paper in some more detail. In Section \ref{sec:PTM} we investigate the solutions of \eqref{fdefinition} in the case when $\mathbf{u}=(T_{n})_{n \in \N}$ is the PTM sequence. We prove that the set $\cal{V}(f)$ of quotients of consecutive terms of $(f(n))_{n\in\N}$ contains exactly $7$ elements (Theorem \ref{PTMcase}). Motivated by this, in Section \ref{sec:fin_auto} we obtain a similar result for a broader class of sequences, where the recurrence \eqref{genmeta} is governed by two binary sequences. Section \ref{sec:construction} describes another, more constructive approach to proving these results using the software \texttt{Walnut} \cite{Mou}. 
In Section \ref{sec:general} we investigate the case when $\textbf{u}$ contains arbitrarily long blocks of zeros, and prove that, under certain conditions, the set $\cal{V}(f)$ contains the set of ratios $\cal{V}(g)$ of a binary recurrence sequence $(g(n))_{n \in \N}$. We also consider ultimately periodic $\textbf{u}$, in which case the set $\cal{V}(f)$ turns out to be finite, have finitely many accumulation points, or be dense in $\R$. Finally, in Section \ref{sec:comments} we present further remarks concerning possible generalization and offer some general conjectures.

\section{Notation and basic observations} \label{sec:notation}
Throughout the remainder of the paper we assume that $\mathbf{u}$ is a given binary sequence and let $(f(n))_{n \in \N}$ denote the sequence defined by the recurrence \eqref{fdefinition} and initial conditions $f(0)=f(1)=1, f(2)=a+b$. We consider the sequence of ratios
$$ h(n) := \frac{f(n+1)}{f(n)}, $$
which will be our main object of study along with the set $\cal{V}(f) = \{h(n): n \in \N  \}$.
We may view $f(n)$ and $h(n)$ in two ways:
\begin{enumerate}
    \item[(i)] as number sequences for fixed $a,b \in \Z$;
    \item[(ii)] as sequences of rational functions in variables $a,b$.
\end{enumerate}
Usually, both cases can be treated in the same way so by default when writing $f(n), h(n)$ we mean the first interpretation.
The rational function approach is convenient when we need to eliminate the possibility $f(n)=0$, in which case $h(n)$ would be undefined. If it is not clear from the context, we will specify when this latter interpretation is to be applied.

We now explore some basic properties of $h(n)$. First, we have $h(0)=1, h(1) = a+b$, and the following recurrence relation:
\begin{equation} \label{h_relation}
     h(n) = \begin{cases}
a + \frac{b}{h(n-1)} &\text{if } u_{n+1}=0, \\
\frac{a}{h(n-1)}+\frac{b}{h(n-1) h(n-2)} &\text{if } u_{n+1}=1,
\end{cases}
\end{equation}
for $n \geq 2$.

Moreover, observe that if $u_n = 0, u_{n+1} = 1$, then $f(n) = f(n+1)$ so $h(n) = 1$. Further still, if additionally $u_{n+2} = 0$, then $f(n+2) = (a+b) f(n+1)$, which yields $h(n+1) = a+b$. In other words, an occurrence of consecutive terms $0,1,0$ in the sequence $\mathbf{u}$ ``resets'' the sequence $(h(n))_{n \in \N}$. This remains true even if we set arbitrary initial values $f(0),f(1),f(2)$. This explains the choice $f(0)=f(1)=1, f(2)=a+b$, which essentially makes $n=0$ the first ``reset point'' and allows for nicer statements of results. Hence, it is useful to consider the set
$$ \mathcal{R}(\mathbf{u}) := \{n \in \N: u_n=0, u_{n+1}=1, u_{n+2} = 0 \} \cup \{0\} = \{n_0 = 0 < n_1 < \cdots   \}.  $$
As we shall see, certain properties of $(h(n))_{n \in \N}$ are tied to whether $\mathcal{R}(\mathbf{u})$ is infinite and there exists $c \geq 1$ such that $n_{k+1}-n_k \leq c$.  
If both conditions hold, we will say that $\mathcal{R}(\mathbf{u})$ has bounded gaps, or, when the value of $c$ is relevant --- has gaps bounded by $c$.
 When $\mathbf{u}$ does not contain $0,1,0$ as a subsequence, all the terms $h(n)$ may depend on the choice of initial values $f(0),f(1),f(2)$. Nevertheless, in such case our results still stand, up to minor modifications.

Other patterns of note in $\mathbf{u}$ include long sequences of $0$s and $1$s. Indeed, as long as $u_n = \varepsilon$ is constant, the values $f(n)$ behave like a binary or ternary recurrence sequence, depending on whether $\varepsilon=0$ or $\varepsilon=1$. In such a case, standard results concerning recurrence sequences may be applicable.

Finally, if $\mathbf{u}$ contains for some $k \geq 2$ the subsequence $(u_k,u_{k+1},u_{k+2})=(0,0,0)$ then substituting it with $(0,1,1,0)$ is equivalent to duplicating the term $f(k)$ (with other terms unchanged). Indeed, if $(f'(n))_{n \in \N}$ is the sequence obtained by this operation, then  
\begin{align*}
   f'(k)=&af'(k-1)+bf'(k-2)=af(k-1)+bf(k-2)=f(k),\\
    f'(k+1)=&af'(k-1)+bf'(k-2)=af(k-1)+bf(k-2)=f(k),\\
    f'(k+2)=&af'(k)+bf'(k-1)=af(k)+bf(k-1)=f(k+1),\\
    f'(k+3)=&af'(k+2)+bf'(k+1)=af(k+1)+bf(k)=f(k+2),
\end{align*}
and inductively $f'(n+1)=f(n)$ for $n \geq k+2$.
Conversely, the substitution $(0,1,1,0) \mapsto (0,0,0)$ deletes such a duplicate. In either case, the set $\mathcal{V}(f)$ stays the same. 

\section{The case of the PTM sequence}\label{sec:PTM}

As a starting point of our investigation, we consider the case where $\mathbf{u}$ is the PTM sequence $(T_{n})_{n\in\N}$. Recall that
$$
T_{n}=s_{2}(n)\bmod{2},
$$
$s_{2}(n)$ is the sum of binary digits of $n$. We have $T_{0}=0, T_{1}=1$ and for $n\in\N_{+}$ the recurrence relations
$$T_{2n}=T_{n}, \qquad T_{2n+1}=1-T_{n}.$$

This is a $2$-automatic sequence, namely there exists a deterministic finite automaton with output (DFAO), shown in Figure \ref{fig:PTM}, which reads the binary representation of $n$ digit by digit and outputs $T_n$.  The expression ``$x/y$'' in a node means that $x$ is its index and $y$ --- its output.

\begin{figure}[h!]
    \centering
    \includegraphics[width=0.5\linewidth]{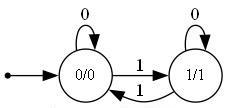}
    \caption{A DFAO generating the PTM sequence}
    \label{fig:PTM}
\end{figure}

More generally, sequences obtained in this fashion using the base-$k$ representation of $n$ are called $k$-automatic. The class of obtained sequences is identical, regardless of whether the automata read the most significant digit first (msd-first for short) or the least significant one (lsd-first). Equivalently, a sequence $(a_n)_{n \in \N}$ is $k$-automatic if its \emph{$k$-kernel}
$$
K_k(\mathbf{a}) = \{(a_{k^jn+i})_{n\in\N}: j \in \N, 0 \leq i < k^j \},
$$
is a finite set. A panorama of properties and applications of automatic sequences is presented in the monograph of Allouche and Shallit \cite{AS03a}. When dealing with automatic sequences, we are going to use the free software \texttt{Walnut} \cite{Mou}, which implements a decision procedure for proving their properties. 
All relevant files are available in the GitHub repository \cite{Git}: \\
 \url{https://github.com/BartoszSobolewski/Binary-sequences-meet-Fibonacci} \\
 An introduction to \texttt{Walnut} and a large collection of applications can be found in the book of Shallit \cite{Sha}. 

To formulate the main result of this section we introduce auxiliary polynomials $d_{i}=d_{i}(a,b)$, given by
\begin{align*}
d_{-1}&=d_{0}=1,\\
d_{1}&=a+b,\\
d_{2}&=a^2+(a+1)b, \\
d_{3}&=a^3+b(a^2+2a+b),             \\
d_{4}&=a^4+b(a^3+3a^2+2ab+b),              \\   d_{5}&=a^5+b(a^4+4a^3+3a^2b+3ab+b^2),      \\
d_{6}&=a^6+b(a^5+5a^4+4a^3b+6a^2b+3ab^2+b^2). 
\end{align*}

\begin{thm}\label{PTMcase}
If $\mathbf{u}$ is the PTM sequence, then $(h(n))_{n \in \N}$ is $2$-automatic and takes values in the set
$$
\cal{V}(f)=\left\{h_{0}, h_{1}, h_{2}, h_{3}, h_{4}, h_{5}, h_{6}\right\},
$$
where
$$
h_{i}=\frac{d_{i}}{d_{i-1}}.
$$
Moreover, for each $i\in\{0, 1, \ldots, 6\}$ there are infinitely many values of $n$ such that $f(n+1)=h_{i}f(n)$ (here $h_{i}$ is treated as an element of $\Q(a,b)$).
\end{thm}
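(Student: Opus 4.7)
My plan combines an algebraic identification of the candidate ratios with a combinatorial analysis driven by the rigid structure of the PTM sequence, and concludes 2-automaticity from a finite-state description. A direct check from the displayed formulas gives $d_i = a\,d_{i-1} + b\,d_{i-2}$ for $i \ge 1$, with $d_{-1} = d_0 = 1$, so that
$$ h_i = a + \frac{b}{h_{i-1}}, \qquad h_i\, h_{i-1} = a\, h_{i-1} + b. $$
Comparing with \eqref{h_relation}, the first identity says that during any maximal run of zeros in $\mathbf{u}$ the values $h(n)$ climb the chain $h_0, h_1, h_2, \ldots$ one index at a time. The second identity, combined with the $u_{n+1} = 1$ case of \eqref{h_relation}, yields $h(n) = h_0$ whenever $u_n = 0$ and $u_{n+1} = 1$, irrespective of prior history; this is the algebraic mechanism behind the reset phenomenon noted in Section~\ref{sec:notation}.

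The combinatorial stage relies on two classical structural features of $\mathbf{u} = (T_n)$: cube-freeness (so neither $000$ nor $111$ occurs as a subword), and boundedness of gaps between consecutive reset points (equivalently, $0,1,0$ appears in every PTM window of some fixed length). Both follow from the Thue--Morse morphism $0 \mapsto 01$, $1 \mapsto 10$ and are immediately verifiable in \texttt{Walnut}. Bounded gaps force $(h(n))$ to decompose into successive inter-reset trajectories of bounded length, and cube-freeness reduces the list of admissible inter-reset PTM subwords to a short finite list. For each such subword I would simulate \eqref{h_relation} starting from the post-reset state $(h(n_k), h(n_k+1)) = (h_0, h_1)$, using the identities above to reduce the simulation to three bookkeeping rules: a $0$-step increments the index by one; an ``$01$'' step drops it to $0$; and a ``$011$'' step inside the admissible PTM patterns sends $h(n-1) = h_i$ to $h(n+1) = h_{i+1}$. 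A careful tally then shows that the trajectory never leaves $\{h_0, \ldots, h_6\}$, that the maximum index $6$ is realized by the longest admissible ascent compatible with PTM, and that by uniform recurrence of PTM every index $i \in \{0, \ldots, 6\}$ is realized by a subword that appears infinitely often.

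The case analysis above is in effect the description of a finite-state transducer $\mathcal{T}$ whose states encode the current pair $(h(n-1), h(n)) \in \{h_0, \ldots, h_6\}^2$ together with a bounded window of recent PTM letters; fed $(T_n)$, $\mathcal{T}$ outputs $(h(n))$. Composing $\mathcal{T}$ with the two-state DFAO from Figure~\ref{fig:PTM} yields a DFAO that reads the base-$2$ digits of $n$ and outputs $h(n)$, proving 2-automaticity; equivalently, $h(n)$ is a fixed function of a bounded-width sliding window of $(T_n)$, so the $2$-kernel of $(h(n))$ is finite. The main obstacle is the inter-reset case analysis itself: although the list of admissible blocks is small, the transitions governed by $u_{n+1} = 1$ couple $h(n-1)$ and $h(n-2)$ nontrivially, and one must lean on the specific identities for the $d_i$ to keep every trajectory inside $\{h_0, \ldots, h_6\}$. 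This is precisely the verification that the \texttt{Walnut}-based approach of Section~\ref{sec:construction} is designed to mechanize.
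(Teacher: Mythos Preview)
Your plan is correct, but it takes a different route from the paper's main proof. The paper first \emph{guesses} a 23-state DFAO $\mathcal{K}$ (obtained numerically) for the sequence $(k_n)$ with $h(n)=h_{k_n}$, and then uses \texttt{Walnut} as a black box to certify that the guessed automaton is consistent with the recurrence \eqref{h_relation}: it enumerates all pairs $(k_n,k_{n+1})$ occurring with $T_{n+2}=0$ and all triples $(k_n,k_{n+1},k_{n+2})$ with $T_{n+3}=1$, and checks the corresponding rational-function identities by hand. Automaticity and the ``infinitely often'' claim then come for free from the explicit DFAO. Your argument, by contrast, is conceptual rather than verificational: the three bookkeeping rules you isolate are exactly equivalent to the $(0,1,1,0)\mapsto(0,0,0)$ substitution that the paper describes in the paragraph \emph{after} the proof, which turns the PTM word into a word $T'$ with only isolated $1$'s and reduces $\mathcal{V}(f)$ to an initial segment of $\{g(n+1)/g(n)\}$. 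What your approach buys is an explanation of \emph{why} the answer is $\{h_0,\ldots,h_6\}$ rather than a mere certificate; what the paper's main proof buys is that the one nontrivial step you leave as a ``careful tally'' --- that the longest ascent compatible with PTM reaches index $6$ and no further (equivalently, consecutive $1$'s in $T'$ are at distance at most $7$, or gaps in $\mathcal{R}(\mathbf{u})$ are at most $9$) --- is discharged mechanically. To complete your argument you should actually perform that tally, e.g.\ by exhibiting a length-$12$ PTM factor of the form $010\,w\,010$ with no internal $010$ and tracing your rules through it, and by checking (via the morphism or \texttt{Walnut}) that no longer such factor exists. Your automaticity argument via a finite transducer fed by the PTM DFAO is precisely the construction the paper carries out in Section~\ref{sec:construction}.
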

\begin{proof}
By generating many initial terms $h(n)$ and using the method described in \cite[Section 5.6]{Sha}, we can guess a $23$-state lsd-first DFAO $\mathcal{K}_0$ computing the sequence $(k_n)_{n \in \N}$ such that $h(n) = h_{k_n}$. 

We now use \texttt{Walnut} to verify that this DFAO is indeed correct.
The DFAO $\mathcal{K}_0$ is stored in \texttt{Walnut}-compatible format in the file \texttt{K0.txt}, which can be found in the repository \cite{Git} along with all other automata used in this paper. In order to work simultaneously with the built-in DFAO for the PTM sequence (which uses msd-first convention), we execute the command

\texttt{reverse K K0:}

It creates a msd-first DFAO $\mathcal{K}$, stored in the file \texttt{K.txt}, generating the same sequence $(k_n)_{n \in \N}$. It is shown in Figure \ref{fig_K}. We have $h(n) = h_{k_n}$ for $n=0,1$ and need to check that the relation \eqref{h_relation} holds when $h(n), h(n-1), h(n-2)$ are replaced with $h_{k_{n}}, h_{k_{n-1}}, h_{k_{n-2}}$. To check that the first case is satisfied (with the index shifted by $2$), we execute the \texttt{Walnut} command

\texttt{def T0 "En K[n]=x \& K[n+1]=y \& T[n+2]=@0":}

It creates a $2$-DFA which reads binary representations of $x,y \in \N$ in parallel and accepts precisely those pairs $(x,y)$ for which there exists $n \in \N$ satisfying:
$$ k_n = x, \quad k_{n+1} = y, \quad T_{n+2} = 0. $$
Inspecting the DFA, we can see that precisely the following pairs $(x,y)$ are accepted:
$$ (0,1), (1,2), (2,3), (3,4), (4,5), (5,6).  $$
For such $(x,y)$ we have $h_y = a + b/h_x$, as desired.

Similarly, to verify the second case, we execute the command:

\texttt{def T1 "En K[n]=x \& K[n+1]=y \& K[n+2]=z \& T[n+3]=@1":}

For each triple $(x,y,z)$ accepted by the resulting $2$-DFA, namely
$$(0,1,0),(1,0,2),(1,2,0),(2,0,3),(2,3,0),(3,4,0),(4,0,5),(4,5,0),(5,6,0),$$
we can check that
$$  h_z = \frac{a}{h_x}  + \frac{b}{h_x h_y}. $$

Finally, through inspection of Figure \ref{fig_K} one can see that for each output $k \in \{0,\ldots,6\}$ there exist infinitely many $n \in \N$ such that $k_n = k$.
\end{proof}

\begin{figure}[h] 
\caption{A $2$-DFAO generating the sequence $(k_n)_{n \in \N}$} \label{fig_K}
\centering
\includegraphics[width=\textwidth]{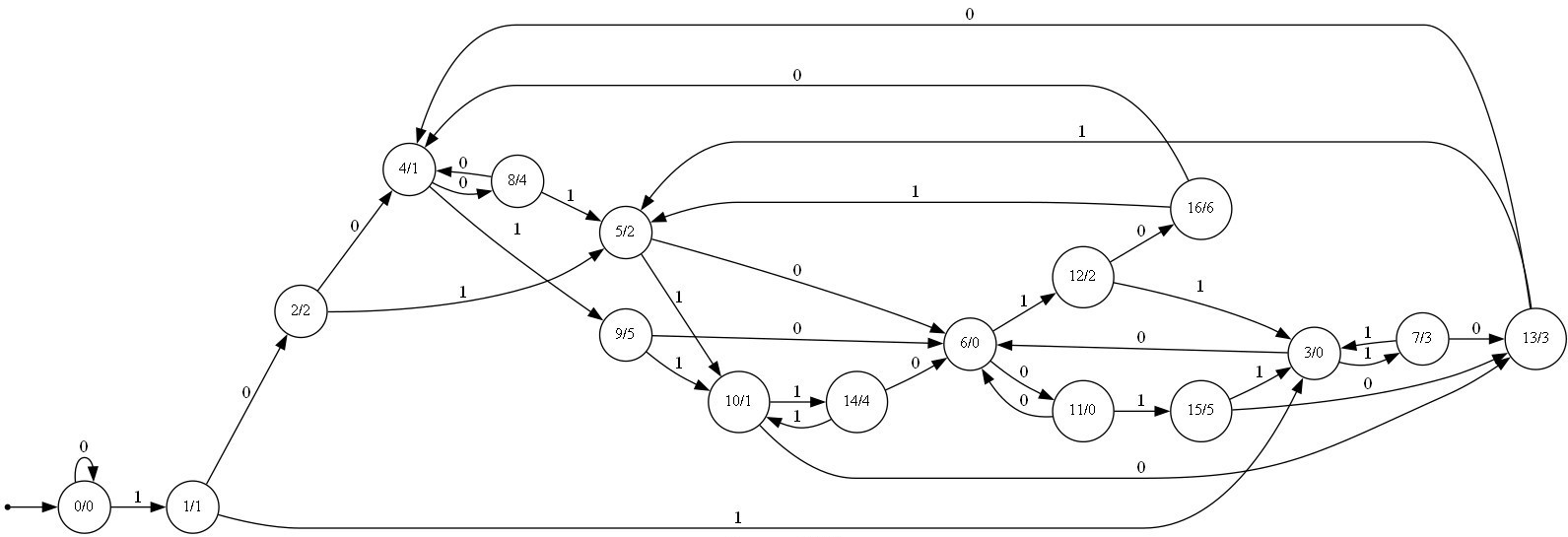}
\end{figure}

Now, the PTM sequence is a very special case due to the property that it contains at most two consecutive $1$'s. Here we sketch another way to determine $\cal{V}(f)$ using this fact. Consider substituting each block $(0,1,1,0)$ in $(T_n)_{n\in\N}$ with $(0,0,0)$ and let $(T'_n)_{n\in\N}$ denote the resulting binary sequence. If $(f'(n))_{n \in \N}$ is the associated sequence of the form \eqref{fdefinition}, then by the observation at the end of Section \ref{sec:notation} we get $\cal{V}(f) = \cal{V}(f')$. But   $(T'_n)_{n\in\N}$ consists of strings of $0$s separated by single $1$s, each marking a ``reset point'' for the sequence of quotients $f'(n+1)/f'(n)$. Hence, these quotients coincide with initial values of $g(n+1)/g(n)$, where $g$ is the binary recurrence sequence given by $g(n+2)=ag(n+1)+bg(n), g(0)=1,g(1)=1$. One can show that $1$s appear in $(T'_n)_{n\in\N}$ at distance at most $7$ (and this distance is attained), which means that $\cal{V}(f) = \{g(n+1)/g(n): 0 \leq n < 7   \}$.

Concerning automaticity, in Sections \ref{sec:fin_auto} and \ref{sec:construction} we give constructive versions of the proof in a more general setting.

\begin{rem}
If we consider other, more complicated automatic sequences in place of $T_n$, the size of the DFAO generating $(h(n))_{n \in \N}$ may change drastically. For example, replacing $T_n$ with a $0$-$1$ variant of the Rudin--Shapiro sequence seems to produce an lsd-first DFAO with $214$ states and $38$ possible outputs $h(n)$.
\end{rem}

\begin{rem}
If we treat $a, b$ as fixed integers (not both equal to 0), then not all quotients $h_{i}$ given in the statement of Theorem \ref{PTMcase} are defined. More precisely, let $C_{i}$ be the curve defined by the equation $d_{i}(a, b)=0$. We see that for each integer point lying on $C_{i}$ we have that $h_{i+1}$ is not defined. If $i=1$ and $(a, b)$ is integer point on $C_{1}$, i.e., $b=-a$ the set of values of the sequence $(f(n))_{n\in\N}$ is infinite as well as the set of $n\in\N$ such that $f(n)=0$.

On the other hand, for $i=2, 3, 4, 5, 6$ the set $C_{i}(\Z)$ of all integral points is finite. To show this, we perform a case-by-case analysis.
First of all let us note that if $a=0$ (or $b=0$) then in each case we get  $b=0$ ($a=0$). Then $f(n)=0$ for $n\geq 2$. We thus assume that $ab\neq 0$.

If $i=2$ then $a^2 + b + a b=0$ and thus $b=-a^2/(a+1)$. We get that $a=0$ or $a=-2$. If $a=-2$ then $b=4$. In this case we have $f(n)=0$ for $n\geq 3$.

Let $i=3$. We consider the equation $d_{3}(a, b)=a^3+a^2b+2ab+b^2=0$ defining the curve $C_{3}$. Let us write $a=ua_{1}, b=ub_{1}$, where $u=\gcd(a, b)$ and $\gcd(a_{1}, b_{1})=1$. After dividing $d_{3}(ua_{1}, ub_{1})$ by $u^2$ we are left with the equation $a_1(a_1^2u+a_1 b_1u+2b_1)=-b_1^2$. Thus, each prime factor of $a_{1}$ is a divisor of $b_{1}$. However, $\gcd(a_{1}, b_{1})=1$ and in consequence $a_{1}\in\{-1, 1\}$, i.e., $a|b$. If $a_{1}=1$ then solving resulting (linear) equation with respect to $u$, we compute
$$
u=-b_{1}-1+\frac{1}{b_{1}+1}.
$$
Because $b_{1}\in\Z$ we get that $b_{1}=-2$ or $b_{1}=0$. In both cases we get that $u=0$ - a contradiction. Exactly the same reasoning works in the case when $a_{1}=-1$.

If $i=4$ then again we deduce that $a|b$ and writing $b=ua, u\neq 0$, we left with the equation $(2a+1)u^2+a(a+3)u+a^2=0$. The discriminant with respect to $u$ is equal to $a^2(a^2-2a+5)$. Thus, the expression $a^2-2a+5$ needs to be a square of integer. The only possibility is $a=1$. Then $u=-1$,  and thus $b=-1$. In this case the sequence $(f(n))_{n \in \N}$ itself is $2$-automatic and takes values $0,1,-1$, all infinitely often. To see this, one can repeat the argument in the proof of Theorem \ref{PTMcase}: guess a DFAO $\mathcal{F}$ for $(f(N))_{n \in \N}$, and then check that the sequence generated by $\mathcal{F}$ indeed satisfies the recurrence \eqref{fdefinition}.
We leave out the details; the interested reader may find relevant \texttt{Walnut} files and commands in the repository \cite{Git}.

If $i=5$ then again we deduce that $a|b$ and writing $b=ua, u\neq 0$, we left with the equation $(u+1)a^2+u(3u+4)a+(u+3)u^2=0$. If $u=-1$ we get that $a=2$ and then $b=-2$. We thus assume that $u\neq -1$. Solving this equation with respect to $a$ we get that
$$
a=-\frac{u(3u+4\pm \sqrt{5u^2+8u+4})}{2(u+1)}.
$$
Because $a, u$ need to be integers, we get that $u+1|1+\sqrt{5u^2+8u+4}$ or $u+1|\sqrt{5u^2+8u+4}-1$. First, consider the case when $u+1|1+\sqrt{5u^2+8u+4}$. Because $0<1+\sqrt{5u^2+8u+4}<3|u+1|$ we see that for some $v\in\{-2, -1, 1, 2\}$ we have the equality $(v(u+1)-1)^2=5u^2+8u+4$. If $v=1$ we get that $u=-1$ - the case we already considered. If $v=2$, then $u=-1$ or $u=-3$. In case of $u=-3$ we get that $a=0$ or $a=15/2$ - a contradiction. If $v=-1$, then $u=-1$ or $u=0$ - the cases we already considered. Finally, if $v=-2$, then $u=-1$ (the case we already considered) or $u=5$. The case $u=5$ implies that $a=-40/3$ or $a=-5/2$. Now, we examine the case when $u+1|\sqrt{5u^2+8u+4}-1$. Because $\sqrt{5u^2+8u+4}-1<3|u+1|$ we see that for some $v\in\{-2, -1, 0, 1, 2\}$ we have the equality $(v(u+1)+1)^2=5u^2+8u+4$. One can check that we get no new solutions $(a,b)$ in this case. Thus, the only integer solution of $d_{5}(a, b)=0$ satisfying the condition $ab\neq 0$ is $(a, b)=(2, -2)$. In this case we have $f(n)=0$ for $n\geq 10$.

The case $i=6$ is more complicated. The polynomial $d_{6}(a,b)$ can be written as $d_{6}(a,b)=a^5(a+b)+l.o.t.$ and thus it satisfies so called the Runge condition (see for example \cite{Walsh}). This allows us to apply Runge's method and deduce that the only solution is $a=b=0$. We omit the details.  
\bigskip

As a final remark, note that each curve $C_{i}$ is rational over $\Q$. Indeed, to get a parametrization of $C_{i}$ it is enough to take $b=ta^2$, where $t$ is rational parameter, and solve the corresponding equation with respect to $a$.
\end{rem}

\section{Finiteness and automaticity} \label{sec:fin_auto}

In this section we show that Theorem \ref{PTMcase} is not just an isolated property of the PTM sequence sequence and offer more general results in this direction. Consider a generalization of the recurrence relation \eqref{fdefinition}, where two independent binary sequences ${\bf u}=(u_n)_{n\in\N}, {\bf v}=(v_n)_{n\in\N} \in\{0,1\}^\N$ appear in the index. More precisely, we let
\begin{equation} \label{fdefinition2}
    \tilde{f}(n) = a\tilde{f}(n-1-u_n) + \tilde{f}(n-2-v_n),
\end{equation} 
where again $a, b \in\Z$ and $\tilde{f}(0)=\tilde{f}(1) = 1, \tilde{f}(2)=a+b$. Also define the associated sequence of quotients by $\tilde{h}(n) = \tilde{f}(n+1)/\tilde{f}(n)$. In particular, if $\mathbf{u} = \mathbf{v}$, then we recover the original definition of $f(n)$ and $h(n)$. The following result gives sufficient conditions for finiteness of $\mathcal{V}(\tilde{f})$ and automaticity of $(\tilde{h}(n))_{n \in \N}$.

\begin{thm}\label{main}
Let $\tilde{f}(n)$ be defined by \eqref{fdefinition2}. Consider the set
\begin{align*}
\tilde{\mathcal{R}}(\mathbf{u}) &=\{n\in\N:\;(u_n, u_{n+1} ) \in \{ (0,1), (1,0)\}\;\mbox{and}\;(v_n, v_{n+1}, v_{n+2})= (0,1,0)\} \cup \{0\},\\
       &=\{n_0 = 0 < n_{1} < \cdots\}.
\end{align*}
If $\tilde{\mathcal{R}}(\mathbf{u})$ has gaps bounded by $c$, then
$\# \cal{V}(\tilde{f})\leq 4^{c-1}+1$.

Moreover, if the sequences ${\bf u}, {\bf v}$ are additionally $k$-automatic, then $(\tilde{h}(n))_{n \in \N}$ is also $k$-automatic.
\end{thm}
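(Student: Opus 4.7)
The plan is to analyze the behavior of the recurrence at each reset point $n_i$ and exploit the bounded-gap hypothesis to bound $\#\cal{V}(\tilde{f})$, then invoke closure properties of $k$-automatic sequences for the automaticity claim.

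First I would show that at each reset point $n_i$ with $i \geq 1$, the triple $(\tilde{f}(n_i), \tilde{f}(n_i+1), \tilde{f}(n_i+2))$ is determined, up to an overall scalar factor, by $a$, $b$ and a bounded amount of local $(u, v)$-data, independently of the history before $n_i$. Concretely, in the sub-case $(u_{n_i}, u_{n_i+1}) = (0, 1)$, the recurrence yields $\tilde{f}(n_i) = \tilde{f}(n_i+1) = a \tilde{f}(n_i-1) + b \tilde{f}(n_i-2)$, so $\tilde{h}(n_i) = 1$; since $v_{n_i+2} = 0$, we have $\tilde{f}(n_i+2) = (a+b) \tilde{f}(n_i+1)$ regardless of $u_{n_i+2}$, so $\tilde{h}(n_i+1) = a+b$. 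In the sub-case $(u_{n_i}, u_{n_i+1}) = (1, 0)$, direct computation gives $\tilde{f}(n_i) = (a+b) \tilde{f}(n_i-2)$ and $\tilde{f}(n_i+1) = (a^2 + ab + b) \tilde{f}(n_i-2)$, so $\tilde{h}(n_i) = (a^2+ab+b)/(a+b)$, and $\tilde{h}(n_i+1)$ takes one of two explicit rational-function values depending on $u_{n_i+2}$. In every case the projective triple belongs to an explicit finite set.

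Second, for any $n \in [n_i, n_{i+1})$, the ratio $\tilde{h}(n)$ can be unfolded step by step from the canonical data at $n_i$ via the $\tilde{h}$-recursion analogous to \eqref{h_relation} (now with both $\mathbf{u}$ and $\mathbf{v}$ governing the recurrence), together with the pairs $(u_m, v_m)$ for $m$ in a window of length at most $c$. Since $n_{i+1} - n_i \leq c$ and the reset pattern fixes several of these pairs at the endpoints, a careful count shows that there are at most $4^{c-1}$ distinct configurations, and hence at most $4^{c-1}$ possible values of $\tilde{h}(n)$ on such an interval. Adding one extra contribution coming from the initial interval $[0, n_1)$ (where no prior reset point exists) yields $\#\cal{V}(\tilde{f}) \leq 4^{c-1} + 1$.

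For the automaticity claim, assume in addition that $\mathbf{u}$ and $\mathbf{v}$ are $k$-automatic. Then the characteristic sequence of $\tilde{\cal{R}}(\mathbf{u})$ is $k$-automatic, since the defining condition is a finite pattern in $\mathbf{u}$ and $\mathbf{v}$; by the bounded-gap assumption, the function $n \mapsto n - \max\{n_i : n_i \leq n\}$ takes values in $\{0, 1, \ldots, c-1\}$ and is itself $k$-automatic. Combined with the local $(u, v)$-data near $n$, also $k$-automatic by closure properties, this provides the finite data from which $\tilde{h}(n)$ is computed by a finite-state map as in the first step; closure of $k$-automatic sequences under finite-state coding then gives that $(\tilde{h}(n))_{n \in \N}$ is $k$-automatic. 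The main technical obstacle will be the case analysis at the reset points, in particular handling the extra dependence on $u_{n_i+2}$ in the sub-case $(u_{n_i}, u_{n_i+1}) = (1,0)$; a secondary subtlety is that $\tilde{f}(n)$ may vanish for specific integer values of $a, b$, for which the rational-function viewpoint from Section \ref{sec:notation} is essential.
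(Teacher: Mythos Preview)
Your overall strategy matches the paper's proof: analyse $(\tilde h(n_i),\tilde h(n_i+1))$ at each reset point, then propagate forward using the $\tilde h$-recursion over a window of length at most $c$, and finally invoke closure of $k$-automatic sequences under sliding-window coding. The reset-point computations you outline are exactly those in the paper.

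There is, however, a genuine gap in your counting. You assert that ``a careful count shows that there are at most $4^{c-1}$ distinct configurations'' on each interval $[n_i,n_{i+1})$ for $i\ge 1$, and then add $1$ for the initial interval $[0,n_1)$. Both steps are off. First, the initial interval contributes nothing new: by the initial conditions, $(\tilde h(0),\tilde h(1))=(1,a+b)$, which is precisely the first reset case, so $n_0=0$ behaves exactly like a reset point of type $(u_n,u_{n+1})=(0,1)$. The ``$+1$'' does not come from there. Second, the bound $4^{c-1}$ for a single interval is not substantiated; the paper instead counts globally over the offset $j=n-n_i$: there are $2$ possible values of $\tilde h(n_i)$, $3$ of $\tilde h(n_i+1)$, and for $2\le j\le c-1$ at most $3\cdot 4^{\,j-1}$ values of $\tilde h(n_i+j)$ (three starting pairs, times $4^{\,j-1}$ choices for $(u_{n_i+3},\dots,u_{n_i+j+1})$ and $(v_{n_i+3},\dots,v_{n_i+j+1})$). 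Summing,
\[
2+3+\sum_{j=2}^{c-1}3\cdot 4^{\,j-1}=4^{c-1}+1,
\]
which is where the ``$+1$'' actually comes from. Without this explicit stratification your sketch does not pin down the exact bound.

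For automaticity your argument via the bounded-valued function $n\mapsto n-\max\{n_i:n_i\le n\}$ is fine and equivalent to the paper's, which more directly observes that $\tilde h(n+c)$ is a function of the $k$-automatic blocks $(u_n,\dots,u_{n+c})$ and $(v_n,\dots,v_{n+c})$; just make sure your ``local $(u,v)$-data near $n$'' really means a window of width $c+1$ so that it captures the data at the preceding reset point as well.
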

\begin{proof}
It is easy to verify that $\tilde{h}(0)=1, \tilde{h}(1)=a+b$, and for $n \geq 2$ we have
\begin{equation}\label{cases}
  \tilde{h}(n) = \begin{cases}
a + \dfrac{b}{\tilde{h}(n-1)} &\text{if } u_{n+1} = v_{n+1}=0, \\
a + \dfrac{b}{\tilde{h}(n-1)\tilde{h}(n-2)} &\text{if } u_{n+1} =0, v_{n+1}=1, \\
\dfrac{a+b}{\tilde{h}(n-1)} &\text{if } u_{n+1} =1, v_{n+1}=0, \\
\dfrac{a}{\tilde{h}(n-1)}+\dfrac{b}{\tilde{h}(n-1) \tilde{h}(n-2)} &\text{if } u_{n+1} = v_{n+1}=1.
\end{cases}
\end{equation}
As was previously the case for the set $\mathcal{R}(\mathbf{u})$, the indices $n=n_i$ act as ``reset points'' for the terms $\tilde{h}(n)$ in the sense that $\tilde{h}(n_i), \tilde{h}(n_i+1)$ are independent of earlier values.
Indeed, by a simple calculation, when $(v_n, v_{n+1}, v_{n+2}) = (0,1,0)$, we get $$ (\tilde{h}(n), \tilde{h}(n+1)) = \begin{cases}
   (1,a+b) &\text{if } (u_n,u_{n+1}) =(0,1), \\
   \left(\frac{a^2+a b+b}{a+b},\frac{a^3+a^2 b+2 a b+b^2}{a^2+a b+b}\right) &\text{if } (u_n,u_{n+1},u_{n+2}) =(1,0,0), \\
   \left(\frac{a^2+a b+b}{a+b},\frac{(a+b)^2}{a^2+a b+b}\right) &\text{if } (u_n,u_{n+1},u_{n+2}) =(1,0,1). \\
\end{cases}$$
Also note that $n_0=0$ falls under the first case, regardless of the initial terms of $\mathbf{u}, \mathbf{v}$.

Hence, for each $i$ there are $2$ possible values of $\tilde{h}(n_i)$ and $3$ possible values of $\tilde{h}(n_i+1)$. Now, consider $\tilde{h}(n_i+j)$, where let $2 \leq j \leq n_{i+1}-n_i-1 \leq c-1$. By repeatedly applying the recurrence relation \eqref{cases} we can see that $\tilde{h}(n_i +j)$ depends only on the pair $(\tilde{h}(n_i), \tilde{h}(n_i+1))$, taking at most distinct $3$ values, and binary sequences $(u_{n_i+3}, \ldots, u_{n_i+j+1})$, $(v_{n_i+3}, \ldots, v_{n_i+j+1})$ of length $j-1$. Therefore, the number of possible values of $\tilde{h}(n_i+j)$ is at most $3 \cdot 4^{j-1}$.
As a consequence, $\tilde{h}(n)$ may attain at most
$$ 2 + 3 + \sum_{j=2}^{c-1} 3 \cdot 4^{j-1} = 4^{c-1}+1$$
distinct values.

Now, assume that $(u_n)_{n \in \N}$ and $(v_n)_{n \in \N}$ are $k$-automatic sequences. Define $U_n = (u_n,\ldots,u_{n+c})$ and $V_n = (v_n,\ldots,v_{n+c})$. For any $n \in \N$ there exists some $i$ such that $n \leq n_i \leq n+c-1$ so by the earlier discussion $\tilde{h}(n+c)$ depends only on $U_n$ and $V_n$. But the sequences $(U_n)_{n \in \N}$, $(V_n)_{n \in \N}$ are $k$-automatic, and thus so is $(\tilde{h}(n+c))_{n \in \N}$. Since shifting a sequence backwards and modifying finitely many terms does not affect $k$-automaticity either, we deduce that $(\tilde{h}(n))_{n \in \N}$ is $k$-automatic as well.
\end{proof}

Taking $\mathbf{u} = \mathbf{v}$ we obtain a more particular result concerning the original sequence $(f(n))_{n \in \N}$. One can then improve the upper bound on $\#\mathcal{V}(f)$ by a similar reasoning as in the proof above. 

\begin{prop} \label{main2}
    Let $f(n)$ be defined by \eqref{fdefinition} and $\mathcal{R}(\mathbf{u}) =\{n_0 = 0 < n_{1} < \cdots\}$.
If $\mathcal{R}(\mathbf{u})$ has gaps bounded by $c$, then $\# \mathcal{V}(f)\leq 2^{c-1}$.

Moreover, if the sequence $\mathbf{u}$ is additionally $k$-automatic, then $(h(n))_{n \in \N}$ is also $k$-automatic.
\end{prop}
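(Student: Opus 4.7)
The plan is to specialize the argument of Theorem \ref{main} to the diagonal case $\mathbf{u}=\mathbf{v}$, where coincidence of the two sequences collapses the branching at each stage and yields the sharper bound $2^{c-1}$. First I would observe that when $\mathbf{u}=\mathbf{v}$, the reset set $\tilde{\mathcal{R}}(\mathbf{u})$ from Theorem \ref{main} collapses to the original $\mathcal{R}(\mathbf{u})$: the requirement $(v_n,v_{n+1},v_{n+2})=(0,1,0)$ together with $\mathbf{u}=\mathbf{v}$ forces $(u_n,u_{n+1})=(0,1)$, so only the first branch of the three-case table in the proof of Theorem \ref{main} is ever active at a reset point. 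In particular, the pair $(h(n_i),h(n_i+1))=(1,a+b)$ is the same for every $i$, rather than one of three possible pairs.

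Next, I would repeat the main count of Theorem \ref{main} under this simplification. For $j$ in the range $2 \leq j \leq n_{i+1}-n_i-1 \leq c-1$, iterating the recurrence \eqref{h_relation} expresses $h(n_i+j)$ as a function of the (now fixed) pair $(h(n_i),h(n_i+1))$ and the single binary block $(u_{n_i+3},\ldots,u_{n_i+j+1})$ of length $j-1$; because we have only one sequence instead of two independent ones, this yields at most $2^{j-1}$ possible values (rather than $3\cdot 4^{j-1}$). Summing the contributions from $j=0$, $j=1$, and $j=2,\ldots,c-1$ gives
\begin{equation*}
\#\mathcal{V}(f) \;\leq\; 1 \;+\; 1 \;+\; \sum_{j=2}^{c-1} 2^{j-1} \;=\; 2^{c-1},
\end{equation*}
which is the desired bound.

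For the automaticity assertion I would simply reuse the closing argument of the proof of Theorem \ref{main}. Defining the sliding window $U_n=(u_n,\ldots,u_{n+c})$, the bounded-gap hypothesis on $\mathcal{R}(\mathbf{u})$ guarantees that every window $[n,n+c-1]$ contains at least one reset point, so by the previous paragraph $h(n+c)$ is a function of $U_n$ alone. The sequence $(U_n)_{n \in \N}$ is $k$-automatic whenever $\mathbf{u}$ is, and $k$-automaticity is preserved under letter-to-letter codings, backward shifts, and modification of finitely many terms, hence $(h(n))_{n \in \N}$ is $k$-automatic. I do not expect any genuine obstacle beyond bookkeeping here; the only mildly subtle point is confirming that the geometric sum really telescopes exactly to $2^{c-1}$ rather than $2^{c-1}+O(1)$, which is an elementary computation.
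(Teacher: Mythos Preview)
Your proposal is correct and matches the paper's approach: the paper does not give a separate proof of Proposition~\ref{main2} but simply remarks that it follows by taking $\mathbf{u}=\mathbf{v}$ in Theorem~\ref{main} and redoing the count, which is precisely what you do. Your observations that $\tilde{\mathcal{R}}(\mathbf{u})$ collapses to $\mathcal{R}(\mathbf{u})$, that only the first reset branch $(h(n_i),h(n_i+1))=(1,a+b)$ survives, and that the four-way branching in \eqref{cases} becomes two-way, together with the sum $1+1+\sum_{j=2}^{c-1}2^{j-1}=2^{c-1}$, are exactly the adjustments needed, and the automaticity argument carries over verbatim.
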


\begin{rem}
 Theorem \ref{main} and Proposition 4.2 can be extended to general automatic sequences, associated with numeration systems other than the standard base-$k$ expansions. Some of these cases, such as Fibonacci-automatic sequences, can be handled by \texttt{Walnut}. 
 This also applies to later results dealing with automaticity, including Proposition \ref{u_automatic} and Theorem \ref{higher_order}. 
\end{rem}

A question arises whether given sufficient conditions for finiteness and automaticity are also necessary. For fixed $a,b \in \Z$ this is not the case in general. As we show in Section \ref{sec:general}, the sequence $(h(n))_{n \in \N}$ may be eventually periodic (and thus $k$-automatic for all $k \geq 2$) even though the set $\mathcal{R}(\mathbf{u})$ does not have bounded gaps.

The situation appears to be different when the $f(n)$ are treated as polynomials in $a,b$. Assume first that $\mathcal{V}(f)$ is finite. If $\mathcal{R}(\mathbf{u})$ does not have bounded gaps, then by the pigeonhole principle there exist indices $n < n'$ satisfying the following conditions:
\begin{itemize}
\item $(h(n),h(n+1)) = (h(n'),h(n'+1))$,
    \item there is no $n_i \in \mathcal{R}(\mathbf{u})$ such that $n < n_i < n'$.
\end{itemize}
In particular, we would obtain a new way to ``reset'' the sequence $(h(n))_{n \in \N}$, which would involve an implausible amount of cancellation when applying the recurrence relation \eqref{h_relation}.
We have verified that for all possible initial segments $(u_n)_{0 \leq n \leq 20}$ starting with $0,1,0$ and not containing any other instance of this block, the equality $(h(n),h(n+1)) = (h(n'),h(n'+1))$ never occurs. Hence, it seems likely that $\mathcal{R}(\mathbf{u})$ having bounded gaps is necessary for finiteness of $\mathcal{V}(f)$. Similarly, we expect the same for the sets $\tilde{\mathcal{R}}(\mathbf{u})$ and $\mathcal{V}(\tilde{f})$ appearing in Theorem \ref{main}. Unfortunately, we have not been able to prove these statements.

On the other hand, in the case of automaticity it is rather simple to show that $(h(n))_{n \in \N}$ being automatic (which entails finiteness of $\mathcal{V}(f)$) implies the same for $\mathbf{u}$.

\begin{prop} \label{u_automatic}
    If the sequence $(h(n))_{n \in \N}$ (treated as elements of $\Q(a,b)$) is $k$-automatic, then $\mathbf{u}$ is also $k$-automatic.
\end{prop}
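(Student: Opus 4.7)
The plan is to realize $\mathbf{u}$ as a coding of a sliding window of the $k$-automatic sequence $(h(n))_{n \in \N}$; such operations preserve $k$-automaticity.

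Since $(h(n))_{n \in \N}$ is $k$-automatic, its image $V := \{h(n) : n \in \N\}$ is a finite subset of $\Q(a,b)$. Hence the sliding window sequence
\[ T(n) := (h(n-2), h(n-1), h(n)) \in V^3, \qquad n \geq 2, \]
is also $k$-automatic as a componentwise sliding operation applied to $(h(n))$.

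I then define a coding $\psi : V^3 \to \{0, 1\}$ by $\psi(x, y, z) = 0$ if $z = a + b/y$ in $\Q(a,b)$, and $\psi(x, y, z) = 1$ otherwise. From \eqref{h_relation}, whenever $u_{n+1} = 0$ the identity $h(n) = a + b/h(n-1)$ holds in $\Q(a,b)$, giving $\psi(T(n)) = 0 = u_{n+1}$. The converse direction is the key point: if $u_{n+1} = 1$ then the formula $h(n) = a/h(n-1) + b/(h(n-1) h(n-2))$ holds, and I must rule out that $h(n) = a + b/h(n-1)$ also holds in $\Q(a,b)$. Clearing denominators, a simultaneous occurrence of both formulas is equivalent to the polynomial identity
\[ a(f(n) - f(n-1)) + b(f(n-1) - f(n-2)) = 0 \quad \text{in } \Z[a, b]. \]
Ruling out this identity is the main technical obstacle of the proof: it amounts to showing that the consecutive differences $f(n) - f(n-1)$ and $f(n-1) - f(n-2)$ cannot exhibit this particular $\Q(a,b)$-linear relation when $u_{n+1} = 1$. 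Should a residual ambiguity remain for certain pairs $(h(n-2), h(n-1)) \in V^2$ (for instance, the pair $(1,1)$ trivially satisfies the relation), I would enlarge the window by additional look-ahead values of $h$ and disambiguate via a bounded case analysis, exploiting that $(f(n))$ is determined by $(h(n))$ up to a global constant.

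Given the coding $\psi$, the sequence $(u_{n+1})_{n \geq 2} = (\psi(T(n)))_{n \geq 2}$ is $k$-automatic as the image of the $k$-automatic sequence $(T(n))_{n \geq 2}$ under the coding $\psi$. Shifting indices and restoring the initial terms $u_0, u_1$ preserves $k$-automaticity, and the conclusion follows.
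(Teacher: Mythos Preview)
Your overall strategy---recover $u_{n+1}$ from the sliding triple $(h(n-2),h(n-1),h(n))$ and then invoke closure of $k$-automatic sequences under windows, codings, and shifts---is exactly the paper's. The gap is that you stop short of the one nontrivial step: showing that the two branches of \eqref{h_relation} never coincide on an actual triple. You correctly flag this as the ``main technical obstacle'' but do not resolve it.

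The paper dispatches it in one line by specialising $a=b=1$. The coincidence
\[
a+\frac{b}{h(n-1)}=\frac{a}{h(n-1)}+\frac{b}{h(n-1)h(n-2)}
\]
then becomes $h(n-1)h(n-2)=1$, i.e.\ $f(n)=f(n-2)$. But with $a=b=1$ every $f(m)$ is a positive integer, and the recurrence $f(n)=f(n-1-u_n)+f(n-2-u_n)$ gives $f(n)-f(n-2)=f(n-1)$ or $f(n)-f(n-2)=f(n-3)$, strictly positive in either case. Since the identity fails at $a=b=1$, it fails in $\Q(a,b)$, and the coding $\psi$ does recover $u_{n+1}$.

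Your proposed fallback---enlarging the window by additional look-ahead values of $h$---cannot work even in principle. By \eqref{h_relation}, each $h(m)$ with $m>n$ is a function of $h(n),h(n-1)$ and of $u_{n+2},u_{n+3},\ldots$ only; $u_{n+1}$ does not appear. Hence if the two branches ever coincided at step $n$, the entire future $(h(m))_{m\ge n}$ would be identical for either value of $u_{n+1}$, and no amount of look-ahead (or look-behind, which is independent of $u_{n+1}$ anyway) could separate them. Ruling out the coincidence is therefore not a technicality to be patched but the whole content of the proof.
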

\begin{proof}
    We are going to use relations \eqref{h_relation} to show that for each $n \geq 2$ the triple $(h(n), h(n-1), h(n-2))$ uniquely determines $u_{n+1}$. Suppose this is not the case, namely
$$ a+ \frac{b}{h(n-1)} = h(n) = \frac{a}{h(n-1)} + \frac{b}{h(n-1)h(n-2)}. $$
Letting $a=b=1$, we get $h(n-1) = 1/h(n-2)$, which contradicts the relations \eqref{h_relation}.

It remains to note that $(h(n+2), h(n+1), h(n))_{n \geq 0}$ is also $k$-automatic, thus so is $(u_{n+3})_{n \in \N}$.
\end{proof}

However, it turns out that the same property is not true for the more general recurrence relation \eqref{fdefinition2}, as the following example shows.

\begin{ex}
   Let $(s_n)_{n \in \N}$ be any binary sequence, and put
   $$  
   u_n = \begin{cases}
       0 &\text{if } n \equiv 0,3 \pmod{4}, \\
       1  &\text{if } n \equiv 1 \pmod{4}, \\
       s_n &\text{if } n \equiv 2 \pmod{4}.
   \end{cases}
   \qquad
   v_n = \begin{cases}
       0 &\text{if } n \equiv 0,2,3 \pmod{4}, \\
       1  &\text{if } n \equiv 1 \pmod{4}, 
   \end{cases}
   $$
   Then the corresponding sequence $\tilde{h}(n)$ consists of periodically repeating values:
   $$  1,a+b,\frac{a^2+a b+b}{a+b},\frac{a^3+a^2 b+2 a b+b^2}{a^2+a b+b}, $$
   and thus is $k$-automatic for all $k$.
   But if we pick a non-automatic sequence $(s_n)_{n \in \N}$, for example the characteristic sequence of squares, then $\mathbf{u}$ is non-automatic too.
\end{ex}

\section{Construction of automata using \texttt{Walnut}} \label{sec:construction}

In this section we sketch a way of constructing a DFAO generating the sequence $(\tilde{h}(n))_{n \in \N}$ under the assumptions of Theorem \ref{main}. The ``heavy lifting'' will be performed by \texttt{Walnut}. 
We illustrate our approach for the special case $\mathbf{u} = \mathbf{v}$ (as in Proposition \ref{main2}) and later discuss the modifications that need to be made for general $\mathbf{u}$, $\mathbf{v}$.

We may try to emulate the proof of Theorem \ref{main} by constructing a 2-DFAO generating $U_n= V_n = (u_n,\ldots, u_{n+c})$, however we have not found a ``nice'' way of doing this in \texttt{Walnut}. Instead, we describe a different approach, involving $1$-uniform transduction of $\mathbf{u}$. For an introduction to transducers and their applications, as well as their usage in \texttt{Walnut} see \cite{SZ23}. 

We first define a transducer $\mathcal{T}$, which reads the terms $u_n$ and mimics the recurrence relation \eqref{h_relation}. Its states are labelled with pairs $(X,Y)$ of rational functions of variables $a,b$, where the initial state is $(1,a+b)$. Starting from the initial state, we inductively define further states and transitions between them as follows. The transition from a state $(X,Y)$ at input $\varepsilon \in \{0,1\}$ leads to the state $(X_\varepsilon, Y_{\varepsilon})$, where
\begin{align*}
    (X_0,Y_0) &= \left(Y, a+\frac{b}{Y} \right), \\
    (X_1,Y_1) &= \left(Y, \frac{a}{Y}+\frac{b}{XY} \right).
\end{align*}
The output of this transition is $X$ (regardless of $\varepsilon$). Comparing this with \eqref{h_relation}, we can see that for
$(X,Y) = (h(n),h(n+1))$ and input $\varepsilon = u_{n+3} \in \{0,1\}$, the transition outputs $h(n)$ and leads to the state $(X_\varepsilon, Y_{\varepsilon})= (h(n+1),h(n+2))$. In particular, the sequence of inputs $0,1,0$ always leads to the initial state in $\mathcal{T}$. We have thus constructed an infinite transducer $\mathcal{T}$, which applied to the shifted sequence $\mathbf{\tilde{u}} = (u_{n+3})_{n \in \N}$ outputs $\mathcal{T}(\tilde{\mathbf{u}})=(h(n))_{n \in \N}$.

If we assume that the set $\mathcal{R}(\mathbf{u})$ has gaps bounded by $c$ (as in Proposition \ref{main2}), then only a finite part of $\mathcal{T}$ is actually reachable. 
We can then modify $\mathcal{T}$ to obtain a finite transducer $\mathcal{T}_c$ such that $\mathcal{T}_c(\tilde{\mathbf{u}}) =\mathcal{T}(\tilde{\mathbf{u}}) = (h(n))_{n \in \N}$. More precisely, let $\ell = \ell(X,Y)$ denote the length of the shortest path from the initial state to $(X,Y)$. Then, we keep intact all states with $\ell < c-1$ and transitions from them. The states with $\ell=c-1$ are left unchanged but all transitions from them are directed to the initial state (without modifying the output). Finally, the states with $\ell > c-1$ are discarded. 

Using the PTM sequence as an example, we show how to apply this approach in \texttt{Walnut} to obtain a $2$-DFAO generating $(h(n))_{n \in \N}$ in a constructive fashion. To begin, we verify that the block $(0,1,0)$ occurs in $(T_n)_{n \in \N}$ with bounded gaps. 
The following command creates a DFA accepting precisely $n=n_k > 0$, namely, such that $(T_n,T_{n+1},T_{n+2})=(0,1,0)$: \vspace{3pt} \\
\texttt{def block "T[n]=@0 \& T[n+1]=@1 \& T[n+2]=@0":} \vspace{3pt} \\
To verify that there are infinitely many such $n$, we execute the command \vspace{3pt} \\
\texttt{eval inf "Am En n>m \& \$block(n)":} \vspace{3pt} \\
which returns \texttt{TRUE}, as expected. We may now construct a DFA which accepts precisely $c \in \N_+$ equal to the gaps $n_{k+1}-n_{k}$: \vspace{3pt} \\
\texttt{def gaps "c>0 \& En (\$block(n) \& \$block(n+c) \& \\ (Aj (j<c-1)  =>  \textasciitilde \$block(n+j+1)))":} \vspace{3pt} \\
By examining the result we see that this DFA only accepts $3,5,7,9$, and thus we may take $c=9$ (knowing that $n_1 = 3$ and $n_0=0$ by our convention).

We thus use the transducer $\mathcal{T}_9$, stored in the file \texttt{Tr9.txt} where, the state labels and outputs are encoded with natural numbers rather than pairs of rational functions (so that they can be processed by \texttt{Walnut}). Then the following commands define the shift $\mathbf{\tilde{u}} = (T_{n+3})_{n\in\N}$ and the transduced sequence $\mathcal{T}(\mathbf{\tilde{u}})$. 
\vspace{3pt} \\
\texttt{def T\_shift\_DFA "T[n+3]=@1":} \\
\texttt{combine T\_shift T\_shift\_DFA:} \\
\texttt{transduce H Tr9 T\_shift:} \\
The obtained msd-first DFAO $\mathcal{H}$, stored in the file \texttt{H.txt}, generates a sequence of natural numbers which is a one-to-one coding (inherited from $\mathcal{T}_9$) of  $(h(n))_{n \in \N}$. In order to compare the result with the DFAO $\mathcal{K}$ in Figure \ref{fig_K}, we manually modify the outputs in $\mathcal{H}$ so that they encode the $h_i$ in the ``old'' way: $h_i \mapsto i$. 
The result is given in the file \texttt{H1.txt}. 
Finally, we can visually check that $\mathcal{H}_1$ is precisely the same as $\mathcal{K}$ or use the following command in \texttt{Walnut}:
\vspace{3pt} \\
\texttt{eval test "An K[n]=H1[n]":} \\
which returns $\texttt{TRUE}$, as expected. 

The same approach works for any binary sequence $\mathbf{u}$. In the more general case \eqref{fdefinition2}, if we have two sequences $\mathbf{u}$ and $\mathbf{v}$, we may encode them with a single one $\mathbf{w} = (w_n)_{n \in \N}$, where $w_n = 2u_n + v_n$. Then the conditions in Theorem \ref{main} can be given in terms of $(w_n,w_{n+1},w_{n+2})$, and the corresponding transducer needs to accept inputs from the set $\{0,1,2,3\}$. In this case the transduction can be more time consuming (especially for a large $c$), hence the guessing approach as in Theorem \ref{PTMcase} may be more effective.

\section{General results}\label{sec:general}

The property that the block $010$ occurs in the sequence $\mathbf{u}$ with bounded gaps is rare among all infinite binary sequences. 
Indeed, such sequences $\mathbf{u}$ do not contain all finite blocks as contiguous subsequences, and thus they constitute a set of Bernoulli product measure $0$. Hence, it is interesting to see what may happen to $\mathcal{V}(f)$ if we relax this assumption. In particular, we present certain general results which guarantee the infinitude of the set of quotients $\cal{V}(f)$. Moreover, in case of eventually periodic sequence ${\bf u}$, we give quite precise results concerning a kind of trichotomy in the shape of $\cal{V}(f)$. 

We start with $\mathbf{u} \in \{0,1\}^\N$ containing blocks of the form
$010^d$ for arbitrarily large $d$, which comprise almost all binary sequences. 
First, we state a simple but useful observation. 
Here and in the sequel by $(g(n))_{n\in\N}$ we mean a linear recurrence sequence defined as $g(0)=g(1)=1$ and
$$
g(n)=ag(n-1)+bg(n-2) \quad  \text{for } n\geq 2,
$$
where $a,b$ are the same as in the definition \eqref{fdefinition} of $f(n)$.

\begin{prop}\label{01000}
Assume that there exist integers $n_0\geq 2, d\geq 2$ such that $u_{n_0}=0, u_{n_0+1}=1$, and $u_{n_0+j}=0$ for each $j\in\{2,\ldots ,d\}$. 
If $f(n_0)\neq 0$, then we have the identity
$$
\frac{f(n_0+j+1)}{f(n_0+j)}=\frac{g(j+1)}{g(j)}
$$
for $j\in\{0,1,\ldots ,d-1\}$.
\end{prop}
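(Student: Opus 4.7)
The plan is to exploit the fact that the block $0,1,0,0,\ldots,0$ (of length $d+1$) in $\mathbf{u}$ starting at position $n_0$ effectively forces the corresponding window of $(f(n))_{n\in\N}$ to behave like a rescaled copy of $(g(n))_{n\in\N}$. The key observation, already noted in Section \ref{sec:notation}, is that the pattern $u_{n_0}=0,u_{n_0+1}=1$ gives $f(n_0)=f(n_0+1)$. Indeed, applying \eqref{fdefinition} with $u_{n_0}=0$ yields $f(n_0)=af(n_0-1)+bf(n_0-2)$, while $u_{n_0+1}=1$ gives $f(n_0+1)=af(n_0-1)+bf(n_0-2)$; hence the two values coincide.

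Next I would use the condition $u_{n_0+j}=0$ for $j\in\{2,\ldots,d\}$ to show that throughout this window the recurrence \eqref{fdefinition} reduces to the plain binary one: for $j\in\{2,\ldots,d\}$,
\[ f(n_0+j)=af(n_0+j-1)+bf(n_0+j-2). \]
Combined with the initial equality $f(n_0)=f(n_0+1)$, this means that the sequence $(f(n_0+j))_{j=0}^{d}$ satisfies the defining recurrence of $(g(j))_{j\in\N}$ with initial conditions $f(n_0)=f(n_0+1)$, exactly paralleling $g(0)=g(1)=1$.

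A straightforward induction on $j$ then gives $f(n_0+j)=f(n_0)\cdot g(j)$ for all $j\in\{0,1,\ldots,d\}$. Since $f(n_0)\neq 0$ by assumption, we may divide, obtaining
\[ \frac{f(n_0+j+1)}{f(n_0+j)}=\frac{f(n_0)\cdot g(j+1)}{f(n_0)\cdot g(j)}=\frac{g(j+1)}{g(j)} \]
for every $j\in\{0,1,\ldots,d-1\}$ (with the same convention regarding the vanishing of $g(j)$ as for $f(n_0+j)$).

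There is essentially no obstacle here: the whole argument rests on the two elementary observations above, both of which follow directly from \eqref{fdefinition}. The hypothesis $n_0\geq 2$ is exactly what is needed so that \eqref{fdefinition} is applicable at $n=n_0$ and $n=n_0+1$, and $d\geq 2$ ensures the window is long enough for the claim to be non-trivial.
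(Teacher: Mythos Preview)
Your proof is correct and follows essentially the same approach as the paper: both use the observation $f(n_0)=f(n_0+1)$ from the block $0,1$, then the binary recurrence on the subsequent block of zeros, and conclude by the induction $f(n_0+j)=f(n_0)g(j)$ for $j\in\{0,\ldots,d\}$.
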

\begin{proof}
   Since $u_{n_0}=0$ and $u_{n_0+1}=1$, we have $$f(n_0)=f(n_0+1)=af(n_0-1)+bf(n_0-2).$$ Then, by induction on $j\in\{0,1,\ldots ,d-1\}$ and the fact that $u_{n_0+j}=0$ for $i\in\{2,\ldots ,d\}$ we get that $$f(n_0+j)=f(n_0)g(j),\ j\in\{0,1,\ldots, d\},$$ which ends the proof.
\end{proof}

\begin{thm}\label{main3}
   Assume that for each $d\in\N$ there exists an integer $n_d\geq 2$ such that $u_{n_d}=0, u_{n_d+1}=1$, and $u_{n_d+j}=0$ for each $j\in\{2,\ldots ,d\}$. 
If $f(n_d)\neq 0$ for infinitely many values of $d$, then 
$$ \mathcal{V}(g) \subset \mathcal{V}(f).$$
If additionally, that $a\neq 0$, $a+b\neq 1$ and $b\not\in\left\{-a^2,-\frac{a^2}{2},-\frac{a^2}{3}\right\}$, then the set $\cal{V}(f)$ is infinite.
\end{thm}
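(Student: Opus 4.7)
The plan is to split the theorem into its two assertions. The first is almost immediate from Proposition~\ref{01000}, while the second reduces to a classical question about the binary linear recurrence $(g(n))_{n\in\N}$.

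For the inclusion $\mathcal{V}(g)\subset\mathcal{V}(f)$, fix any $j\in\N$ with $g(j)\neq 0$. By hypothesis there are infinitely many $d$ with $f(n_d)\neq 0$, so we may choose one with $d>j$. Proposition~\ref{01000} applied with $n_0=n_d$ then gives
$$\frac{f(n_d+j+1)}{f(n_d+j)}=\frac{g(j+1)}{g(j)},$$
and since $f(n_d+j)=f(n_d)g(j)\neq 0$, this places $g(j+1)/g(j)$ in $\mathcal{V}(f)$.

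For the infinitude of $\mathcal{V}(f)$, by the inclusion just proved it suffices to show that $\mathcal{V}(g)$ is infinite. The crucial observation is that $h_g(n):=g(n+1)/g(n)$ obeys the deterministic recursion $h_g(n+1)=a+b/h_g(n)$, so by pigeonhole, finiteness of $\mathcal{V}(g)$ would force $(h_g(n))$ to be eventually periodic. I would rule this out using the Binet-type closed form $g(n)=A\alpha^n+B\beta^n$, where $\alpha,\beta$ are the roots of $\chi(x)=x^2-ax-b$. Since $T(x):=a+b/x$ is conjugate (in the eigenbasis of the associated matrix) to $z\mapsto(\alpha/\beta)z$ when $\alpha\neq\beta$, the orbit of $x_0=1$ is eventually periodic iff $1$ is a fixed point of $T$ (equivalently $a+b=1$, excluded) or $\rho:=\alpha/\beta$ is a root of unity. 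A direct computation gives $\rho+\rho^{-1}=-(a^2+2b)/b$, so $\rho$ satisfies $b\rho^2+(a^2+2b)\rho+b=0$ over $\Q$. If $\rho$ is a primitive $q$-th root of unity then $\varphi(q)\leq 2$, hence $q\in\{1,2,3,4,6\}$; matching coefficients with $\Phi_q$ for $q\in\{3,4,6\}$ (or substituting $\rho=\pm 1$ for $q\in\{1,2\}$) yields $q=1\Leftrightarrow\Delta:=a^2+4b=0$, $q=2\Leftrightarrow a=0$, $q=3\Leftrightarrow b=-a^2$, $q=4\Leftrightarrow b=-a^2/2$, and $q=6\Leftrightarrow b=-a^2/3$. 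The double-root case $\Delta=0$ needs separate handling: $g(n)=(A+Bn)\alpha^n$ with $B\neq 0$ (otherwise $\alpha=1$ and $a+b=1$), and $h_g(n)=\alpha(A+B(n+1))/(A+Bn)$ is strictly monotonic in $n$, so $\mathcal{V}(g)$ is infinite.

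The main obstacle is this cyclotomic matching step: one must enumerate every $q$ with $\varphi(q)\leq 2$ and compute the precise constraint on $(a,b)$. Fortunately the list is short and the computation elementary, and the resulting five conditions match exactly the exclusions in the hypothesis. A minor caveat is that $b=0$ is not explicitly excluded; in that degenerate case $\beta=0$, the Binet formula breaks down, $g(n)=a^{n-1}$ for $n\geq 1$, and $\mathcal{V}(g)=\{1,a\}$ is finite. A direct check using the relation \eqref{h_relation} with $b=0$ shows $\mathcal{V}(f)\subseteq\{1,a\}$ is also finite, so the assumption $b\neq 0$ is genuinely needed and should be added to the hypotheses.
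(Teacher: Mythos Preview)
Your argument is essentially the same as the paper's: both deduce $\mathcal{V}(g)\subset\mathcal{V}(f)$ from Proposition~\ref{01000}, then show $\mathcal{V}(g)$ is infinite by writing $g(n)$ in Binet form, reducing finiteness to $\alpha/\beta$ being a root of unity in a quadratic extension (hence of order $1,2,3,4$ or $6$), and treating the double-root case separately. Your use of the one-step recursion $h_g(n+1)=a+b/h_g(n)$ plus pigeonhole to get eventual periodicity, and your explicit computation of $\rho+\rho^{-1}=-(a^2+2b)/b$ to match against $\Phi_q$, are slightly more hands-on than the paper's direct inspection of the Binet ratio, but the structure and the resulting case list are identical.

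Your final remark about $b=0$ is correct and is a genuine oversight in the theorem as stated (and in the paper's own proof, which tacitly assumes both roots of $X^2-aX-b$ are nonzero when it asserts that finiteness of $\mathcal{V}(g)$ forces $\gamma\delta=0$ or $\beta/\alpha$ a root of unity). For instance $a=2$, $b=0$ satisfies all listed hypotheses yet $g(n)=2^{n-1}$ for $n\geq 1$ and $\mathcal{V}(g)=\{1,2\}$, and one checks from \eqref{h_relation} that $\mathcal{V}(f)\subseteq\{1,a\}$ as well. So the hypothesis $b\neq 0$ indeed needs to be added. One small imprecision: in the double-root case $h_g(n)=\alpha(A+B(n+1))/(A+Bn)$ need not be literally monotone (the denominator may change sign once), but it is injective in $n$, which is all you need.
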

\begin{proof}
    The inclusion $\mathcal{V}(g) \subset\cal{V}(f)$ follows directly from Proposition \ref{01000}. It suffices to show that the set $\mathcal{V}(g)$ is infinite if $a,b$ satisfy the assumptions.
    
    Let $\alpha, \beta\in\C$ be the roots of the polynomial $X^2-aX-b$. If $0\neq\alpha\neq\beta$, then there exists $\gamma, \delta\in\C$ such that $g(n)=\gamma\alpha^n+\delta\beta^n$ for each $n\in\N$. Consequently, 
    $$\frac{g(n+1)}{g(n)}=\frac{\alpha\gamma+\beta\delta\left(\frac{\beta}{\alpha}\right)^n}{\gamma+\delta\left(\frac{\beta}{\alpha}\right)^n},$$
    so the set $\mathcal{V}(g)$ is finite only if $\gamma\delta=0$ or $\frac{\beta}{\alpha}$ is a root of unity. The condition $\gamma\delta=0$ implies that $g$ is a geometric progression. Because $g(0)=g(1)=1$, we have $g(n)=1$ for each $n\in\N$ but this is impossible as $g(2)=a+b\neq 1$. Note that $\alpha,\beta$ belong to some quadratic extension of $\Q$, so does $\frac{\beta}{\alpha}$. Moreover, a root of unity has algebraic degree at most equal to $2$ exactly when its multiplicative order belongs to the set $\{1,2,3,4,6\}$. Consequently, if $\frac{\beta}{\alpha}$ is a root of unity, then $\frac{\beta}{\alpha}\in\left\{-1, \pm i, \frac{-1\pm i\sqrt{3}}{2}, \frac{1\pm i\sqrt{3}}{2}\right\}$. One can prove that
    \begin{itemize}
        \item $\frac{\beta}{\alpha}=-1$ if and only if $a=0$,
        \item $\frac{\beta}{\alpha}=\pm i$ if and only if $b=-\frac{a^2}{2}$,
        \item $\frac{\beta}{\alpha}=\frac{-1\pm i\sqrt{3}}{2}$ if and only if $b=-a^2$,
        \item $\frac{\beta}{\alpha}=\frac{1\pm i\sqrt{3}}{2}$ if and only if $b=-\frac{a^2}{3}$.
    \end{itemize}
    All of the above possibilities are excluded by the assumptions of the theorem.

    We are left with the case $\alpha=\beta$. The assumption $a\neq 0$ ensures that $\alpha\neq 0$. Then there exists $\gamma, \delta\in\C$ such that $g(n)=(\gamma n+\delta)\alpha^n$ for each $n\in\N$. Hence, 
    $$\frac{g(n+1)}{g(n)}=\alpha\frac{\gamma n+\gamma+\delta}{\gamma n+\delta},$$
    so the set $\mathcal{V}(g)$ is finite only if $\gamma=0$. This means that $g$ is a geometric progression and we have just seen that this case is excluded because $g(0)=g(1)=1\neq a+b=g(2)$.
\end{proof}

\begin{ex}\label{counterex}
Let $\mathbf{u}$ be the characteristic sequence of the set of powers of 2, i.e., $u_{n}=1$ for $n=2^{k}$ and 0 otherwise. Taking $n_{d}=2^{d}-1$ for each $d\in\N_{+}$ we see that the assumptions of Theorem \ref{main3} are satisfied. We then have the identity
$$
\frac{f(n+1)}{f(n)}=\frac{g(n-2^{\lfloor\log_{2}(n+1)\rfloor}+2)}{g(n-2^{\lfloor\log_{2}(n+1)\rfloor}+1)}
$$
for $n\geq 4$.
\end{ex}

We now prove the infinitude of $\cal{V}(f)$ under a weaker assumption on $\mathbf{u}$ than in the previous theorem, namely that arbitrarily long blocks of zeros occur. In return, we need to assume that $a^2+4b$ is not a square of an integer.

\begin{thm}\label{000}
Assume that the following conditions hold:
\begin{enumerate}
    \item for each $d\in\N$ there exists an integer such that $u_{n_d+j}=0$ for each $j\in\{0,\ldots ,d\}$;
    \item $a\neq 0$, $b\not\in\left\{-a^2,-\frac{a^2}{2},-\frac{a^2}{3}\right\}$ and $a^2+4b$ is not a square of an integer;
    \item $f(n_d-1)f(n_d-2)\neq 0$ for infinitely many $d$.
\end{enumerate}
Then the set $\cal{V}(f)$ is infinite.
\end{thm}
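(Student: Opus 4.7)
The plan is to proceed by contradiction: suppose $\cal{V}(f)$ is finite, of size $N$. The key observation is that inside a sufficiently long block of zeros of $\mathbf{u}$, the recurrence for $f$ reduces to the ordinary linear recurrence $f(n) = af(n-1) + bf(n-2)$, so consecutive ratios of $f$ evolve by the Möbius map $\phi(r) = a + b/r$. A long enough $\phi$-orbit constrained to lie in the finite set $\cal{V}(f)$ must, by pigeonhole, repeat a value and hence produce a rational periodic point of $\phi$; but under (2) the only periodic points of $\phi$ will turn out to be the irrational characteristic roots of $X^2-aX-b$, which yields the contradiction.

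To implement this, use (3) to pick $d \geq N$ with $f(n_d-1), f(n_d-2) \neq 0$, and set $q_d(j) := f(n_d-2+j)$ for $0 \leq j \leq d+2$. Assumption (1) guarantees $q_d(j) = aq_d(j-1)+bq_d(j-2)$ for $2 \leq j \leq d+2$. Let $\alpha,\beta$ be the roots of $X^2-aX-b$. Assumption (2) forces $b \neq 0$ (otherwise $a^2+4b=a^2$ is a square), hence $\alpha\beta = -b \neq 0$; it also gives $\alpha \neq \beta$ and $\alpha,\beta \in \Q(\sqrt{a^2+4b}) \setminus \Q$. Write $q_d(j) = \gamma \alpha^j + \delta \beta^j$; irrationality of $\alpha,\beta$ together with $q_d(0),q_d(1) \in \Z \setminus \{0\}$ forces $\gamma, \delta \neq 0$. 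The root-of-unity analysis from the proof of Theorem~\ref{main3} -- using that $\beta/\alpha$ lies in a quadratic extension of $\Q$ and so can only be a root of unity of order $1, 2, 3, 4$, or $6$, all of which are ruled out by $\alpha \neq \beta$, $a \neq 0$, and $b \notin \{-a^2,-a^2/2,-a^2/3\}$ -- shows that $\beta/\alpha$ is not a root of unity. In particular, $(\beta/\alpha)^j = -\gamma/\delta$ has at most one integer solution, so $q_d$ vanishes at most once in $\{0,\ldots,d+2\}$.

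Now put $r_d(j) := q_d(j+1)/q_d(j)$ whenever defined. Each such $r_d(j)$ lies in $\cal{V}(f)$, and one verifies that $r_d(j+1) = \phi(r_d(j))$ with $\phi(r) = a + b/r$ viewed as a Möbius transformation of $\C \cup \{\infty\}$ (setting $\phi(0) = \infty$ and $\phi(\infty) = a$); a single possible $\infty$-value does not break this relation. The fixed points of $\phi$ are $\alpha,\beta$, and the substitution $z = (r-\alpha)/(r-\beta)$ conjugates $\phi$ to $z \mapsto (\beta/\alpha)z$, whose only periodic points in $\C \cup \{\infty\}$ are $0$ and $\infty$; hence the only periodic points of $\phi$ are $\alpha$ and $\beta$. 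Since at most one of $r_d(0),\ldots,r_d(d+1)$ is undefined, at least $d+1 > N$ of them lie in $\cal{V}(f)$, so pigeonhole yields $0 \leq j_1 < j_2 \leq d+1$ with $r_d(j_1) = r_d(j_2)$. Iterating gives $\phi^{j_2-j_1}(r_d(j_1)) = r_d(j_1)$, so $r_d(j_1) \in \{\alpha,\beta\}$. But $r_d(j_1)$ is a ratio of nonzero integer values of $f$, hence rational, contradicting $\alpha,\beta \notin \Q$. The main care required is running the $\phi$-dynamics on $\C \cup \{\infty\}$ rather than $\C$, so that the (at most one) zero of $q_d$ within the block does not derail the Möbius iteration; the remaining steps are routine.
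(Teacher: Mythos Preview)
Your proof is correct and follows essentially the same route as the paper: both arguments restrict to a long zero-block where $f$ obeys the binary recurrence, use that $a^2+4b$ is a nonsquare to get distinct irrational roots $\alpha,\beta$ and hence nonvanishing Binet coefficients, and then invoke the root-of-unity analysis from Theorem~\ref{main3} to conclude that $\beta/\alpha$ is not a root of unity, which forces the successive ratios inside the block to be pairwise distinct. The only cosmetic difference is that the paper reads off injectivity directly from the Binet-type formula $h(n_d-2+j)=\dfrac{\alpha\gamma_d+\beta\delta_d(\beta/\alpha)^j}{\gamma_d+\delta_d(\beta/\alpha)^j}$, while you phrase the same fact dynamically via the M\"obius map $\phi$ and its periodic points; your treatment of a possible single zero of $q_d$ by working on $\C\cup\{\infty\}$ is in fact slightly more careful than the paper's.
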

\begin{proof}
Let $n_d \geq 2$ satisfy (3).
Then the sequence given by the formula $g_d(j)=f(n_d-2+j)$, $j\in\{0,\ldots ,d+2\}$, is a non-zero linear binary recurrent sequence with characteristic polynomial $X^2-aX-b$. Since the determinant $a^2+4b$ of this polynomial is not a square of an integer, its roots $\alpha ,\beta$ are distinct and irrational. Thus, there exist $\gamma_d , \delta_d\in\C$ such that $g_d(j)=\gamma_d\alpha^j+\delta_d\beta^j$ for $j\in\{0,\ldots ,d+2\}$. Because $g_d(j)$ are integers, none of $\gamma_d , \delta_d$ can be equal to $0$. Then 
$$\frac{f(n_d-1+j)}{f(n_d-2+j)}=\frac{g_d(j+1)}{g_d(j)}=\frac{\alpha\gamma_d+\beta\delta_d\left(\frac{\beta}{\alpha}\right)^n}{\gamma_d+\delta_d\left(\frac{\beta}{\alpha}\right)^n}$$ for $j\in\{0,\ldots ,d+1\}$. As in the previous proof, condition (2) ensures that $\frac{\beta}{\alpha}$ is not a root of unity.
Hence, the set 
$$\left\{\frac{f(n_d-1+j)}{f(n_d-2+j)}:\ j\in\{0,\ldots ,d+1\}\right\}$$ has exactly $d+2$ distinct elements. Since we can take $d$ arbitrarily large, we conclude that the set $\cal{V}(f)$ is infinite.
\end{proof}

We now turn to the case where $\textbf{u}$ is ultimately periodic. 
We then have an interesting trichotomy in the topological type of the set $\cal{V}(f)$.

\begin{thm}\label{period}
Let $\bf{u}$ be an ultimately periodic binary sequence of period $l$ and preperiod $m$ (i.e.\ $u_{n+l}=u_n$ for $n\geq m$).
Then the set $\cal{V}(f)$ has one of the following forms:
\begin{itemize}
    \item $\cal{V}(f)$ is finite; to be more precise, the sequence $(h(n))_{n\in\N}$ is ultimately periodic of period $l$ and preperiod $t+l$ or of period $rl$ and preperiod $t$, where $r\in\{1,2,3,4,6\}$ and $$t=\begin{cases}
    \max\{m-3,0\} & \text{ if } u_n=1 \text{ for } n\geq m, \\
    \min\{n\geq \max\{m-2,0\}: u_{n+2}=0\} & \text{ otherwise};
    \end{cases}$$
    \item has at most $l$ accumulation points (with respect to natural topology on $\R\cup\{\infty\}$);
    \item is dense in $\R$.
\end{itemize}
(The periods and preperiods are not necessarily the least possible.)
\end{thm}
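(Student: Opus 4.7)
The plan is to pass to a linear-algebraic framework and classify the possible asymptotic behaviors of projective iterates of a fixed transition matrix. Setting $F(n) := (f(n), f(n-1), f(n-2))^\top$, the recurrence \eqref{fdefinition} becomes $F(n+1) = M_{u_{n+1}} F(n)$, where
$$
M_0 = \begin{pmatrix} a & b & 0 \\ 1 & 0 & 0 \\ 0 & 1 & 0 \end{pmatrix}, \qquad M_1 = \begin{pmatrix} 0 & a & b \\ 1 & 0 & 0 \\ 0 & 1 & 0 \end{pmatrix}.
$$
Once $n$ is past the preperiod (at position $t$ as in the statement), the sequence $(u_n)$ is exactly $l$-periodic, so $F(n+l) = M^{(r)} F(n)$, where $r \equiv n \pmod{l}$ and $M^{(r)} = M_{u_{n+l}} \cdots M_{u_{n+1}}$. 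Since cyclic rearrangements of a product of $l$ fixed matrices share a common characteristic polynomial, all $l$ matrices $M^{(r)}$ have the same eigenvalues $\lambda_1, \lambda_2, \lambda_3$. Within a fixed residue class $r$, the quotient $h(n_0 + kl)$ is read off from the projective orbit $[(M^{(r)})^k F(n_0)] \in \mathbb{P}^2$ via the coordinate ratio $(x,y,z) \mapsto x/y$.

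Next I would classify the three possible asymptotic behaviors of such projective orbits according to the spectrum. If a single eigenvalue strictly dominates in modulus and $F(n_0)$ has nonzero component along its eigenspace, the projective orbit converges in $\mathbb{P}^2$; aggregating over the $l$ residue classes yields at most $l$ accumulation points (with $\cal{V}(f)$ finite precisely when each such orbit is eventually constant). If two eigenvalues of maximum modulus have ratio a root of unity of order $r_0$, then $(M^{(r)})^{r_0}$ acts by a scalar on the dominant invariant subspace, the projective orbit is eventually periodic of period $r_0$, and $(h(n))$ is ultimately periodic of period $r_0 l$, placing us in the finite case. Otherwise, a complex conjugate pair of dominant eigenvalues induces an irrational rotation on an invariant real circle in $\mathbb{P}^2$, whose image under the coordinate ratio is dense in $\R \cup \{\infty\}$, yielding density of $\cal{V}(f)$.

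The list $r_0 \in \{1, 2, 3, 4, 6\}$ emerges as follows: the characteristic polynomial of $M^{(r)}$ is a cubic over $\Q(a, b)$, so its non-real roots form a complex-conjugate pair lying in a quadratic extension $K$; any root of unity of order $r_0$ in $K$ satisfies $\varphi(r_0) \leq 2$, which forces $r_0 \in \{1, 2, 3, 4, 6\}$. The split in the preperiod formula (between $t$ and $t+l$) reflects whether the periodic part of $\mathbf{u}$ contains a $(0,1,0)$ ``reset'' block --- in the generic case $t$ may be taken as the index of such a reset, while in the degenerate case where the periodic part consists entirely of $1$s no reset is available, requiring the extra $l$.

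The main obstacle will be handling the degeneracies carefully. The product $M^{(r)}$ is singular whenever the periodic part of $\mathbf{u}$ contains any $0$ (since $\det M_0 = 0$), so one must descend to the image of $M^{(r)}$, a subspace of dimension at most $2$, and re-examine the reduced dynamics there. One also has to deal with non-diagonalizable $M^{(r)}$ (Jordan blocks in the dominant part), initial vectors lying on non-generic invariant subspaces which suppress a dominant eigenvalue, and occasional $f(n) = 0$ that would render $h(n)$ undefined --- the latter is an algebraic condition on $(a,b)$ satisfied only finitely often (or sidestepped entirely by using the rational-function interpretation (ii) from Section \ref{sec:notation}), and does not affect the topological classification of $\cal{V}(f)$.
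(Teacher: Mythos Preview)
Your projective/matrix framework is sound and is closely related to what the paper does, but the paper organizes the argument differently and fills in two places where your sketch is either incorrect or genuinely incomplete.

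\textbf{Comparison of strategies.} The paper splits immediately into two cases according to whether the periodic part of $\mathbf{u}$ is identically~$1$ or contains a~$0$. In the all-$1$s case it treats $(f(n))_{n\ge t}$ directly as a third-order linear recurrence with characteristic polynomial $X^3-aX-b$ and analyzes the root configuration. In the has-a-$0$ case it exploits the fact that at an index $n_0$ with $u_{n_0}=0$ the value $f(n_0+k)$ depends linearly on only $(f(n_0-1),f(n_0-2))$; this gives an explicit $2$-dimensional reduction and a single M\"obius transformation $T=T_{u_{n_0},\ldots,u_{n_0+l-1}}$ whose iterates on $h(n_0-2)$ govern all of $(h(n))_{n\ge t}$. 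Your uniform $3$D approach must eventually do the same thing---descend to the $\le 2$-dimensional image of the singular $M^{(r)}$---so the paper's case split is really a concrete choice of coordinates for that descent rather than a different idea. What your approach buys is uniformity; what the paper's buys is that the ``main obstacle'' you flag (the singular case) is handled by an explicit formula instead of abstract rank arguments.

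\textbf{The preperiod split.} Your explanation here is wrong. The alternative ``period $l$, preperiod $t+l$'' versus ``period $rl$, preperiod $t$'' has nothing to do with whether a $(0,1,0)$ reset block is present. In the paper's analysis the first alternative occurs precisely when the M\"obius map $T$ degenerates to a constant function (so the orbit stabilizes only after one further pass through the period); the second is the generic case, with $r$ the multiplicative order of $\lambda_1/\lambda_2$. Separately, the two-branch definition of $t$ itself reflects the all-$1$s versus has-a-$0$ dichotomy above, again not resets.

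\textbf{The density argument.} Your assertion that an irrational rotation on an invariant real circle in $\mathbb{P}^2$ projects via $(x,y,z)\mapsto x/y$ onto something dense in $\R\cup\{\infty\}$ is not automatic; a priori the image of such a circle could be a bounded arc. The paper closes this gap (in both cases) by observing that the set of accumulation points of $h(n)$ is the image of the unit circle under a M\"obius transformation, hence a circle or line in $\C$; since all the values $h(n)$ are real, a genuine circle is impossible and the closure must be $\R\cup\{\infty\}$. In your $3$D setup you would need the analogous step, which again effectively forces the reduction to a $2$-dimensional invariant plane first.
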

\begin{proof}
    Let us start with the case of $u_n=1$ for $n\geq m$. Then the sequence $(f(n))_{n\geq t}$ is linear recurrent of third order, where $t=\max\{0,m-3\}$. Hence $$f(n)=\gamma_1\alpha_1^{n-t}+\gamma_2\alpha_2^{n-t}+\gamma_3\alpha_3^{n-t}, \quad n\geq t$$ for and some $\gamma_1, \gamma_2, \gamma_3\in\C$, where $\alpha_1, \alpha_2, \alpha_3\in\C$ are pairwise distinct roots of the polynomial $X^3-aX-b$, or $$f(n)=(\gamma_1+\delta_1(n-t))\alpha_1^{n-t}+\gamma_2\alpha_2^{n-t}, \quad n\geq t$$ for and some $\gamma_1, \delta_1, \gamma_2\in\C$, where $\alpha_1\in\C$ is the double root of the polynomial $X^3-aX-b$ and $\alpha_2\in\C$ is the simple one (in this case $\alpha_1, \alpha_2, \gamma_1, \gamma_2, \delta_1\in\Q$). Note that we exclude the case of triple root of the polynomial $X^3-aX-b$ as it holds only for $a=b=0$, which emerges with ultimately zero sequence.

    Let $\alpha_1, \alpha_2, \alpha_3\in\C$ be pairwise distinct. Assume without loss of generality that $|\alpha_1|\geq |\alpha_2|\geq |\alpha_3|$. If $\gamma_1\neq 0$, then
    $$h(n) = \frac{f(n+1)}{f(n)}=\frac{\alpha_1+\frac{\gamma_2}{\gamma_1}\alpha_2\left(\frac{\alpha_2}{\alpha_1}\right)^{n-t}+\frac{\gamma_3}{\gamma_1}\alpha_3\left(\frac{\alpha_3}{\alpha_1}\right)^{n-t}}{1+\frac{\gamma_2}{\gamma_1}\left(\frac{\alpha_2}{\alpha_1}\right)^{n-t}+\frac{\gamma_3}{\gamma_1}\left(\frac{\alpha_3}{\alpha_1}\right)^{n-t}},\quad n\geq t.$$
    Hence, if $|\alpha_1|>|\alpha_2|$, then $\cal{V}(f)$ has exactly one accumulation point if $(\alpha_2\gamma_2,\alpha_3\gamma_3)\neq (0,0)$ or the sequence $(h(n))_{n\in\N}$ is ultimately constant with preperiod $t+1$ if $(\alpha_2\gamma_2,\alpha_3\gamma_3)=(0,0)$. If $|\alpha_1|=|\alpha_2|>|\alpha_3|$, $\frac{\alpha_2}{\alpha_1}$ is not a root of unity, and $\gamma_1\gamma_2\neq 0$, then $\cal{V}(f)$ has the set of accumulation points being an image of a unit circle under some M\"{o}bius transformation. On the other hand, $\cal{V}(f)\subset\R$, so then $\cal{V}(f)$ is dense in $\R$. If $|\alpha_1|=|\alpha_2|=|\alpha_3|$, then $\{\alpha_1,\alpha_2,\alpha_3\}=\{\alpha,\alpha z,\alpha z^{-1}\}$ for some $\alpha\in\R\backslash\{0\}$ and $z\in\C$ with $|z|=1$. By Viete's formula $\alpha_1+\alpha_2+\alpha_3=0$ (recall that $\alpha_1,\alpha_2,\alpha_3$ are the roots of the polynomial $X^3-aX-b$) we get $1+z+z^2=0$, so $z$ is a primitive root of unity of order $3$. This is why we now consider the more general case of $|\alpha_1|=|\alpha_2|\geq |\alpha_3|$ and $\frac{\alpha_2}{\alpha_1}$ being a root of unity. Then $\frac{\alpha_2}{\alpha_1}$ is a primitive root of unity of order $2, 3, 4$ or $6$ as it is an algebraic number of degree at most $3$ (recall that $\alpha_1\neq\alpha_2$). However, if $\frac{\alpha_2}{\alpha_1}$ is a root of unity of degree $4$ or $6$, then $\alpha_2=\bar{\alpha}_1$ and consequently
    $$|\alpha_1+\alpha_2+\alpha_3|\geq 2|\alpha_1|\cos\frac{\pi}{4}-|\alpha_3|=\sqrt{2}|\alpha_1|-|\alpha_3|>0,$$
    which stays in contradiction with the equality $\alpha_1+\alpha_2+\alpha_3=0$ by Viete's formula. Thus, $\frac{\alpha_2}{\alpha_1}\in\{-1,e^{\pm 2\pi i/3}\}$. If $\frac{\alpha_2}{\alpha_1}=-1$, then  by Viete's formulae we have $\alpha_3=0$ and $b=0$. As a result,
    $$\frac{f(n+1)}{f(n)}=\frac{\alpha_1-\frac{\gamma_2}{\gamma_1}\alpha_1\left(-1\right)^{n-t}}{1+\frac{\gamma_2}{\gamma_1}\left(-1\right)^{n-t}},\quad n\geq t+1,$$
    so the sequence $(h(n))_{n\in\N}$ is ultimately periodic with period $2$ and preperiod $t+1$. If $\frac{\alpha_2}{\alpha_1}=e^{\pm 2\pi i/3}$, then by Viete's formula we have
    $$0=|\alpha_1+\alpha_2+\alpha_3|\geq 2|\alpha_1|\cos\frac{\pi}{3}-|\alpha_3|=|\alpha_1|-|\alpha_3|\geq 0.$$
    Consequently, $|\alpha_1|=|\alpha_2|=|\alpha_3|$ and we have already shown that $\{\alpha_1,\alpha_2,\alpha_3\}=\{\alpha,\alpha e^{2\pi i/3},\alpha e^{-2\pi i/3}\}$ for some $\alpha\in\R\backslash\{0\}$. Assume without loss of generality that $\alpha_2=\alpha_1 e^{2\pi i/3}$ In particular, $a=0$. As a result,
    $$\frac{f(n+1)}{f(n)}=\alpha\frac{1+\frac{\gamma_2}{\gamma_1}\left(e^{2\pi i/3}\right)^{n-t+1}+\frac{\gamma_3}{\gamma_1}\left(e^{-2\pi i/3}\right)^{n-t+1}}{1+\frac{\gamma_2}{\gamma_1}\left(e^{2\pi i/3}\right)^{n-t}+\frac{\gamma_3}{\gamma_1}\left(e^{-2\pi i/3}\right)^{n-t}},\quad n\geq t,$$
    so the sequence $(h(n))_{n\in\N}$ is ultimately periodic with period $3$ and preperiod $t$.

     If $\gamma_1=0$ and $\gamma_2\neq 0$, then similarly as above we can conclude that:
     \begin{itemize}
         \item if $|\alpha_2|>|\alpha_3|$ and $\alpha_3\gamma_3\neq 0$, then $\cal{V}(f)$ has exactly one accumulation point,
         \item if $\alpha_3\gamma_3=0$, then the sequence $(h(n))_{n\in\N}$ is ultimately constant with preperiod $t+1$,
         \item if $|\alpha_2|=|\alpha_3|$ and $\gamma_3\neq 0$, then the sequence $(h(n))_{n\in\N}$ is ultimately periodic with period $4$ (this holds exactly when $a=-2c^2$ and $b=4c^3$ for some $c\in\Z$) or $6$ (this holds exactly when $a=-6c^2$ and $b=9c^3$ for some $c\in\Z$) and preperiod $t$ or $\cal{V}(f)$ is dense in $\R$.
     \end{itemize} 
     If $\gamma_1=\gamma_2=0$ and $\alpha_3\gamma_3\neq 0$, then we easily get that the sequence $(h(n))_{n\in\N}$ is ultimately constant with preperiod $t$.

     Now assume that $\alpha_1$ is the double root of the polynomial $X^3-aX-b$ and $\alpha_2$ is the simple one. First, let us note that $\alpha_2=-2\alpha_1$ by the Viete's formula $2\alpha_1+\alpha_2=0$. Thus,
     $$\frac{f(n+1)}{f(n)}=\alpha_2\frac{(\gamma_1+\delta_1(n-t+1))\left(-\frac{1}{2}\right)^{n-t+1}+\gamma_2}{(\gamma_1+\delta_1(n-t))\left(-\frac{1}{2}\right)^{n-t}+\gamma_2},\quad n\geq t,$$
     so the sequence $(h(n))_{n\in\N}$ is ultimately constant with preperiod $t$ if $\gamma_1\gamma_2=0$ and $\delta_1=0$ or $\cal{V}(f)$ has exactly one accumulation point otherwise.

     \bigskip

     From now on we focus on the case when $u_n=0$ for infinitely many values of $n\in\N$. Let $n_0=\min\{n\geq\max\{m,2\}: u_n=0\}=t+2$. 
     Then, we see that for each $n,k\in\N$ with $n\geq 2$ and $u_n=0$ we have
     $$f(n+k)=L_{u_{n},\ldots ,u_{n+k}}(f(n-1),f(n-2)),$$
     where $L_{-2}(x_1,x_2)=x_2$, $L_{-1}(x_1,x_2)=x_1$, $L_{u_{n}}(x_1,x_2)=L_0(x_1,x_2)=ax_1+bx_2$, and
     $$
         L_{u_{n},\ldots ,u_{n+k}}(x_1,x_2)=
         \begin{cases}
             aL_{u_{n},\ldots ,u_{n+k-1}}(x_1,x_2)+bL_{u_{n},\ldots ,u_{n+k-2}}(x_1,x_2),\,&\text{if } u_{n+k}=0\\
             aL_{u_{n},\ldots ,u_{n+k-2}}(x_1,x_2)+bL_{u_{n},\ldots ,u_{n+k-3}}(x_1,x_2),\,&\text{if } u_{n+k}=1
         \end{cases}
     $$
     for $k\geq 1$ with the convention that $L_{u_{n},\ldots ,u_{n-2}}=L_{-2}$ and $L_{u_{n},\ldots ,u_{n-1}}=L_{-1}$. Since the mappings $L_{u_{n_0},\ldots ,u_{n_0+k}}$ are linear, we have 
     $$\frac{L_{u_{n_0},\ldots ,u_{n_0+k+1}}(x_1,x_2)}{L_{u_{n_0},\ldots ,u_{n_0+k}}(x_1,x_2)}=\frac{L_{u_{n_0},\ldots ,u_{n_0+k+1}}(x_1/x_2,1)}{L_{u_{n_0},\ldots ,u_{n_0+k}}(x_1/x_2,1)}.$$
     Then, for each $k\geq -1$ we may write
     \begin{align*}
         &h(n_0+k-1)=\frac{f(n_0+k)}{f(n_0+k-1)}=\frac{L_{u_{n_0},\ldots ,u_{n_0+k}}(f(n_0-1),f(n_0-2))}{L_{u_{n_0},\ldots ,u_{n_0+k-1}}(f(n_0-1),f(n_0-2))}\\
         =&\frac{L_{u_{n_0},\ldots ,u_{n_0+k}}(h(n_0-2),1)}{L_{u_{n_0},\ldots ,u_{n_0+k-1}}(h(n_0-2),1)}=:T_{u_{n_0},\ldots ,u_{n_0+k}}(h(n_0-2)).
     \end{align*}
     In particular,
     \begin{align*} 
         h(n_0+jl+i-1)=T_{u_{n_0},\ldots ,u_{n_0+i}}\circ T_{u_{n_0},\ldots ,u_{n_0+l-1}}^j(h(n_0-2))
     \end{align*}
     for any $j\in\N$ and $i\in\{0,1,\ldots ,l-1\}$. All the maps of the form $T_{u_{n_0},\ldots ,u_{n_0+k}}$, $k\in\N$, are M\"{o}bius transformations or constant functions. Hence, the form of the sequence $(h(n))_{n\geq n_0-2}$ follows from the form of the sequence $(T_{u_{n_0},\ldots ,u_{n_0+l-1}}^j(h(n_0-2)))_{j\in\N}$. Thus, we need to explore the cardinality and topological type of the orbit of a given rational (real) number under a given M\"obius transformation with integral coefficients.

     For a given field $K$ there is an isomorphism between the group $GL_2(K)/\{\lambda\cdot\textbf{1}_2: \lambda\in K\backslash\{0\}\}$ (here $\textbf{1}_2$ denotes the $2\times 2$ identity matrix) and the group of M\"obius transformations over $K$ induced by the homomorphism
     $$\left[\begin{array}{cc}
         r & s \\
         t & u    \end{array}\right]\mapsto\left\{x\mapsto\frac{rx+s}{tx+u}\right\}.$$
    Hence, the analysis of M\"obius transormations boils down to the one of $2\times 2$ matrices.

    Since every complex-valued matrix is similar to some upper-triangular one, every complex M\"obius transformation is conjugated to an affine mapping. Indeed, let $T(x)=\frac{rx+s}{tx+u}$. Denote by $\lambda_1,\lambda_2$ the (not necessarily distinct) eigenvalues of the matrix $M=\left[\begin{array}{cc}
    r & s \\
    t & u \end{array}\right]$. If $t\neq 0$, then $M$ has an eigenvector of the form $\left[\begin{array}{c}
    z_1 \\
    1 \end{array}\right]$ for the eigenvalue $\lambda_1$ (actually, $z_1$ is a fixed point of $T$). Thus,
    $$\left[\begin{array}{cc}
    0 & 1 \\
    1 & -z_1 \end{array}\right]\left[\begin{array}{cc}
    r & s \\
    t & u \end{array}\right]\left[\begin{array}{cc}
    z_1 & 1 \\
    1 & 0 \end{array}\right]=\left[\begin{array}{cc}
    \lambda_1 & t \\
    0 & \lambda_2 \end{array}\right].$$
    Hence, the dynamical properties of $T$ are the same as the affine function given by the formula $F(x)=\frac{\lambda_1}{\lambda_2}x+\frac{t}{\lambda_2}$. It is easy to show (see \cite{B}) that if $x_0$ is not a fixed point of $F$, then:
    \begin{enumerate}
        \item $\lim_{n\to\infty}F^n(x_0)=\infty$ if $\left|\frac{\lambda_1}{\lambda_2}\right|>1$ or $\frac{\lambda_1}{\lambda_2}=1$ and $t\neq 0$,
        \item the sequence $(F^n(x_0))_{n\in\N}$ is convergent to a fixed point $w_0\neq\infty$ of $F$ if $\left|\frac{\lambda_1}{\lambda_2}\right|<1$,
        \item the set $\{F^n(x_0): n\in\N\}$ is dense in a circle or a line in a complex plane if $\left|\frac{\lambda_1}{\lambda_2}\right|=1$ and $\frac{\lambda_1}{\lambda_2}$ is not a root of unity,
        \item the set $\{F^n(x_0): n\in\N\}$ is finite if  $\frac{\lambda_1}{\lambda_2}\neq 1$ is a root of unity or $\frac{\lambda_1}{\lambda_2}=1$ and $t=0$; then $\#\{F^n(x_0): n\in\N\}$ is equal to the multiplicative order of $\frac{\lambda_1}{\lambda_2}$.
    \end{enumerate}
    The above remains true after replacing $F$ by $T$. From now on we assume that $r,s,t,u\in\R$ and $t$ may be equal to $0$. When the case $(3)$ takes place, then the closure of the set $(T^n(x_0))_{n\in\N}$ in $\C\cup\{\infty\}$ is $\R\cup\{\infty\}$ as $T(\R\cup\{\infty\})=\R\cup\{\infty\}$. Now assume further that $r,s,t,u\in\Z$ and consider the case $(4)$. Then $\lambda_1,\lambda_2$ lie in some quadratic extension of $\Q$, so does $\frac{\lambda_1}{\lambda_2}$. We already know that the multiplicative order of $\frac{\lambda_1}{\lambda_2}$ belongs to the set $\{1,2,3,4,6\}$. Note that every value from the set $\{2,3,4,6\}$ can be attained as a multiplicative order of $\frac{\lambda_1}{\lambda_2}$ for $T=T_{u_{n_0},\ldots ,u_{n_0+l-1}}$. Indeed, if $\textbf{u}$ is ultimately constant and equal to $0$, then:
    \begin{itemize}
        \item $2$ can be attained for $a=0$ and arbitrary $b\neq 0$,
        \item $3$ can be attained for arbitrary $a\neq 0$ and $b=-a^2$,
        \item $4$ can be attained for arbitrary even $a\neq 0$ and $b=-\frac{a^2}{2}$,
        \item $6$ can be attained for arbitrary $a\neq 0$ divisible by $6$ and $b=-\frac{a^2}{3}$.
    \end{itemize}
    We do not need to check if $\frac{\lambda_1}{\lambda_2}$ can be equal to $1$ for $T=T_{u_{n_0},\ldots ,u_{n_0+l-1}}$ as then $(T^n(x_0))_{n\in\N}$ is constant or convergent. We know that $(T^n(x_0))_{n\in\N}$ is convergent if e.g. $\textbf{u}$ is ultimately constant and equal to $0$, $a\neq 0$ and $a^2+4b>0$ and $(T^n(x_0))_{n\in\N}$ is constant if e.g. $\textbf{u}$ is constant and equal to $0$, $a^2+4b$ is a square of an integer and $x_0=\frac{q}{p}$ is a fixed point of $T$.

    Combining the above discussion with the fact that 
    $$\{h(n): n\geq n_0-2\}=\bigcup_{i=-1}^{l-2}\{T_{u_{n_0},\ldots ,u_{n_0+i}}\circ T_{u_{n_0},\ldots ,u_{n_0+l-1}}^j(h(n_0-2)): j\in\N\},$$
    we get the statement of the theorem. Note that if $T_{u_{n_0},\ldots ,u_{n_0+l-1}}$ is constant, then $(h(n))_{n\geq n_0+l-2}$ is periodic of period $l$ (this situation may happen if $a=0$, $b=1$ and the periodic part of the sequence $\textbf{u}$ is $``01111"$).         
\end{proof}

\begin{rem}
    Recall that the Kepler set of a given sequence $(a_n)_{n\in\N}$ is the set of all the partial limits of the sequence $\left(a_{n+1}/a_n\right)_{n\in\N}$. The notion of a Kepler set (as a generalization of Kepler limit defined by Fiorenza and Vincenzi in \cite{FV}) as $\lim_{n\to\infty}(a_{n+1}/a_n)$) was introduced by Berend and Kumar in \cite{BK}, who were interested in Kepler sets of complex linear binary recurrent sequences with constant coefficients. Theorem \ref{period} shows that if $\textbf{u}$ is ultimately periodic, then the topological type of the sequence $(h(n))_{n\in\N}$ is the same as in the case when $(f(n))_{n\in\N}$ is a linear binary recurrent sequence (compare with \cite{BHP, BK}). As one can see, with the use of Theorem \ref{period} we may deduce the form of the Kepler set of $(f(n))_{n \in \N}$.
\end{rem}

\begin{rem} \label{rem_periodic_010}
    If we assume in Theorem \ref{period} that $\textbf{u}$ contains the block $010$ in its periodic part (so $\mathcal{R}(\mathbf{u})$ has bounded gaps), then  $(h(n))_{n\in\N}$ is eventually periodic.
    However, the converse is not true, as evidenced by the example $u_{3n}=0, u_{3n+1}=u_{3n+2}=1$ and $a^2+2b=0$. In such a case $(f(n))_{n \in \N}$ consists of six interleaving geometric sequences, each with quotient $-b^2$ (and all nonzero, unless $a=b=0$ or $a=2,b=-2$).  
\end{rem}

\section{Final comments}\label{sec:comments}

In the light of our results one can ask what is going on in the case when we consider the sequence $(f(n))_{n\in\N}$ defined by \eqref{fdefinition} but with the sequence ${\bf u}=(u_{n})_{n\in\N}$ not satisfying the assumptions of the earlier results.

As already discussed in Section \ref{sec:fin_auto}, one may ask whether the condition for finiteness in Proposition \ref{main2} is also necessary.

\begin{ques}
Assume that $\mathcal{V}(f)$ is finite (when $f(n)$ are treated as polynomials in $a,b$). Is it true that $\mathcal{R}(\mathbf{u})$ has bounded gaps?
\end{ques}

We strongly believe that this is indeed the case. As Remark \ref{rem_periodic_010} shows, this may not be true in general for fixed $a,b$ though. In such a case we may extend the question to cover the first case in Theorem \ref{period}.

\begin{ques}
Assume that for some fixed $a,b$ the set $\mathcal{V}(f)$ is finite and $f(n) \neq 0$ for infinitely many $n$. Is it true that $\mathbf{u}$ is eventually periodic or $\mathcal{R}(\mathbf{u})$ has bounded gaps?
\end{ques}

We expect that the answer is again affirmative for almost all pairs $(a,b)$ in the sense of asymptotic density.

In the case where $\mathcal{V}(f)$ is infinite, we may also ask more precisely for its structure.

\begin{ex}
Let us consider $a=b=1$ and $u_{n}=T_{\lfloor n/2\rfloor}$, where $(T_{n})_{n\in\N}$ is the PTM sequence. We expect that in this case the set $\cal{V}(f)$ is infinite and its structure is difficult to describe, as can we see in Figure \ref{fig3}. 
\end{ex}

\begin{figure}
    \centering
    \includegraphics[width=1\linewidth]{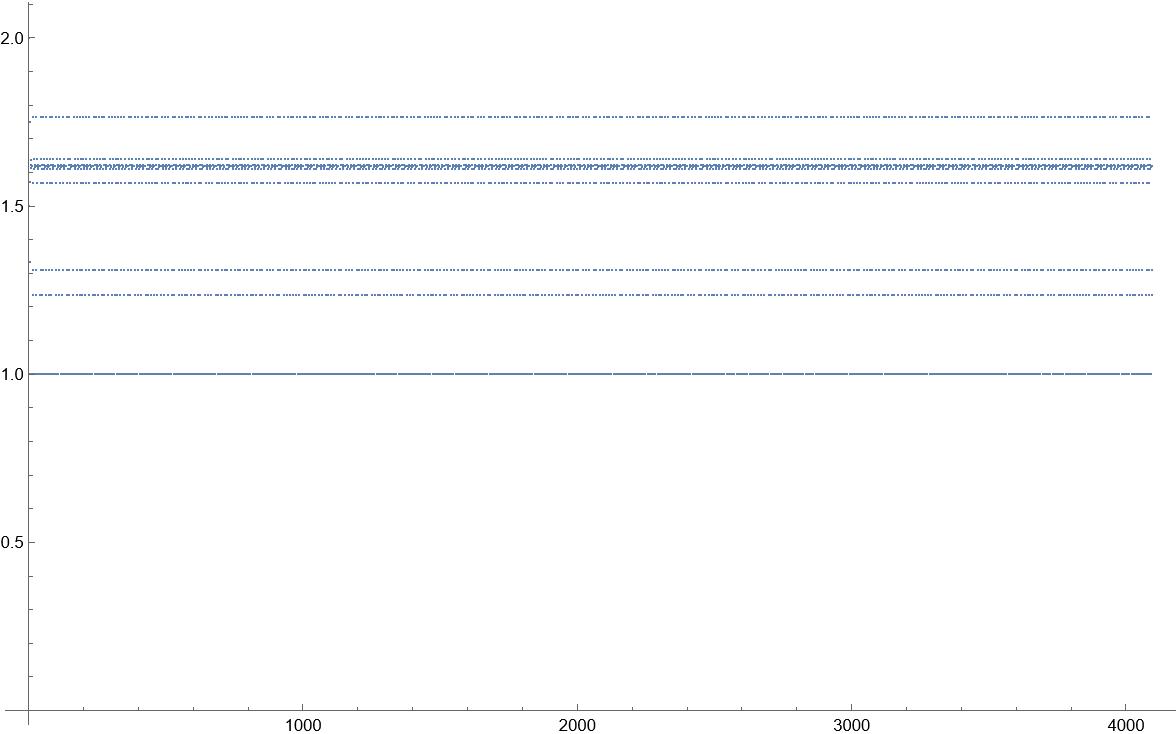}
    \caption{Plot of $f(n+1)/f(n)$ for $n\in\{1,\ldots,2^{12}\}$}.
    \label{fig3}
\end{figure}

The above example and other computer experiments suggest the following.

\begin{conj}
Assume that ${\bf u}$ is not eventually periodic, the set $\mathcal{R}(\mathbf{u})$ is finite, and the set $\{n\in\N:\;u_{n}=0,\ u_{n+1}=u_{n+2}=u_{n+3}=1\}$ is infinite. Then, for almost all pairs $(a, b)$, in the sense of asymptotic density, the set $\cal{V}(f)$, where $f$ is defined by \eqref{fdefinition}, has infinitely many accumulation points. More precisely, the set of accumulation points of the set $\cal{V}(f)$ in $\R\cup\{\infty\}$ is homeomorphic to a union of the Cantor set and a finite set or is the whole set $\R\cup\{\infty\}$. 
\end{conj}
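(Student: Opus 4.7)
The plan is to interpret $(h(n))_{n\in\N}$ as the projection of a projective orbit of a non-autonomous dynamical system driven by $\mathbf{u}$, and then to apply the machinery of iterated function systems (IFS) of Möbius-type transformations. Lifting \eqref{fdefinition} to triples, the state $(f(n), f(n-1), f(n-2))$ evolves in $\mathbb{P}^2$ by one of the companion-type matrices
\[
A_0 = \begin{pmatrix} a & b & 0 \\ 1 & 0 & 0 \\ 0 & 1 & 0 \end{pmatrix}, \qquad A_1 = \begin{pmatrix} 0 & a & b \\ 1 & 0 & 0 \\ 0 & 1 & 0 \end{pmatrix},
\]
whose characteristic polynomials are $t(t^2-at-b)$ and $t^3-at-b$ respectively, and $h(n) = f(n+1)/f(n)$ is the first ratio of this projective state. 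Consequently, up to finitely many initial terms, $\cal{V}(f)$ is the projection to $\R\cup\{\infty\}$ of the projective orbit under the infinite word $A_{u_3}A_{u_4}\cdots$.

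Because $\mathcal{R}(\mathbf{u})$ is finite, past some $n^*$ the sequence $\mathbf{u}$ avoids the block $010$, and so decomposes as a concatenation $0^{a_1}1^{b_1}0^{a_2}1^{b_2}\cdots$ with all $b_i\geq 2$; the third hypothesis ensures infinitely many $b_i\geq 3$, and the non-periodicity of $\mathbf{u}$ says the sequence of block data $(a_i,b_i)$ is not eventually periodic. I would then pass from the two-letter alphabet $\{A_0,A_1\}$ to the richer effective alphabet $\{A_0^{a_i}A_1^{b_i}\}_i$. Within each block the dynamics is that of a standard linear recurrence—binary for $A_0^{a_i}$ and ternary for $A_1^{b_i}$—so one can compute the eigendata of each composite explicitly in terms of the roots of $t^2-at-b$ and $t^3-at-b$.

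The crux of the argument is to show, for $(a,b)$ outside a set of asymptotic density zero in $\Z^2$, that each composite $A_0^{a_i}A_1^{b_i}$ induces on $\mathbb{P}^1$ (through its stable foliation on $\mathbb{P}^2$) a strict contraction onto a short cylinder, while varying with $(a_i,b_i)$ enough to give a genuinely hyperbolic IFS. Classical attractor theory for conformal IFSs of Möbius transformations then produces a topological Cantor set as the set of accumulation points of the projective orbit, to which a finite set of exceptional transients—coming from the initial segment $n\leq n^*$ and from degenerate orbits lying on stable manifolds of isolated fixed points—must be adjoined. The dichotomy in the conjecture, Cantor-plus-finite versus all of $\R\cup\{\infty\}$, then corresponds to whether the Möbius subgroup generated by the $A_0^{a_i}A_1^{b_i}$ acts properly discontinuously on $\mathbb{P}^1$ or has dense orbits—exactly mirroring the trichotomy in the ultimately periodic setting of Theorem \ref{period}. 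The ``almost all in density'' qualifier is justified by noting that the resonant parameters (root-of-unity eigenvalue ratios, multiple roots, integer-square discriminants) lie on a finite union of algebraic curves in $(a,b)$-space and so have density zero in $\Z^2$.

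The principal obstacle is the hyperbolicity step: proving uniform contraction of $A_0^{a_i}A_1^{b_i}$ across all $i$, at rates bounded away from $1$ despite the arbitrariness of $(a_i,b_i)$, and then deriving a separation property (some form of open-set condition) that yields a Cantor set rather than a fatter limit set. A natural route is a random-matrix-product / Oseledets-type analysis, exploiting the combinatorial richness guaranteed by the non-periodicity of $\mathbf{u}$ and the infinitude of $0111$ blocks to ensure enough mixing between consecutive blocks. A further subtle point is proving that resonant configurations really do yield the full $\R\cup\{\infty\}$ case rather than some intermediate behaviour; this should follow by exhibiting an irrational rotation in the induced Möbius group at those special parameters, in the spirit of the resonance analysis already carried out in the proof of Theorem \ref{period}.
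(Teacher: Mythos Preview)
The statement you are addressing is a \emph{conjecture} in the paper, not a theorem: the authors give no proof, only the numerical evidence of Figure~\ref{fig3} and an informal analogy with the trichotomy of Theorem~\ref{period}. There is therefore no paper proof to compare against, and your proposal must be judged on its own merits.

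The architecture you suggest---lift to a projective orbit, pass to the block alphabet induced by the $0^{a_i}1^{b_i}$ decomposition, and invoke IFS attractor theory---is a natural first thought, and the reduction from $\mathbb{P}^2$ to $\mathbb{P}^1$ can indeed be made honest: at each index $n$ with $u_n=0$ the triple $(f(n),f(n-1),f(n-2))$ is determined by the pair $(f(n-1),f(n-2))$, so the transition between consecutive $0$-block starts is a genuine $2\times 2$ matrix, exactly as in the proof of Theorem~\ref{period}. (Your ``stable foliation'' language is not the right mechanism here; the collapse is algebraic, not dynamical.)

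However, two substantial gaps remain, and they sit precisely where the difficulty of the conjecture lies. First, the effective alphabet $\{A_0^{a_i}A_1^{b_i}\}_i$ is in general \emph{infinite}, since nothing in the hypotheses bounds the block lengths; standard hyperbolic IFS results (Hutchinson, or the conformal theory you allude to) need either finitely many maps or delicate uniformity and separation conditions that you neither state nor verify. As $a_i$ or $b_i\to\infty$ the contraction ratios and images of these M\"obius maps can degenerate, and the open-set condition has no a~priori reason to hold. Second, and more fundamentally, you conflate the attractor of an IFS (the closure of \emph{all} finite compositions applied to a point) with the accumulation set of a \emph{single} non-autonomous orbit driven by the fixed sequence $(a_i,b_i)_i$. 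These coincide only under strong recurrence hypotheses on the driving sequence---roughly, that every admissible finite word of maps actually occurs---and the assumptions of the conjecture do not imply this. A non-periodic $(a_i,b_i)$ taking, say, only two values in a Sturmian pattern already shows that the orbit closure can be strictly smaller than the full IFS attractor.

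In short, your outline identifies the right objects but does not supply the two ingredients that would turn it into a proof: control of an infinite, possibly degenerating family of M\"obius maps, and a passage from IFS attractor to single-orbit closure. These are exactly the points the authors leave open.
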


To conclude, we note that our setting can be extended to recurrence relations of the type \eqref{fdefinition} of any order $r \geq 2$. 
For example, we may obtain a generalization of Proposition  \ref{main2}. The proof is analogous and left to the reader.

\begin{thm} \label{higher_order}
    Let $r\geq 2$, ${\bf u}^{(r)}\in\{0,1,\ldots ,r-1\}^\N$, $a_1,\ldots ,a_r\in\Z$, and let $(f^{(r)}(n))_{n\in\N}$ be defined by
    \begin{align*}
        f^{(r)}(n)&=1 \text{ for } n\leq r-1,\\
        f^{(r)}(r)&=\sum_{j=0}^r a_j,\\
        f^{(r)}(n)&=\sum_{j=1}^r a_jf^{(r)}(n-j-u^{(r)}_n) \text{ for } n\geq r+1.
    \end{align*}
If the set
$$
\{n\in\N:\ (u^{(r)}_n,\ldots ,u^{(r)}_{n+r})=(0,1,\ldots ,r-1,0)\} \cup \{0\} =\{n_{0}=0 < n_{1}< \cdots\}
$$
has gaps bounded by $c$, then $\mathcal{V}(f^{(r)})$
has at most $1+\frac{r^{c-r+1}-1}{r-1}$ elements.

Moreover, if the sequence $\mathbf{u}^{(r)}$ is additionally $k$-automatic, then the sequence $(f^{(r)}(n+1)/f^{(r)}(n))_{n \in \N}$ is also $k$-automatic.
\end{thm}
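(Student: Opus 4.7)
The plan is to transcribe the proof of Proposition \ref{main2} into the order-$r$ framework. The argument splits into a ``reset propagation'' computation that pins down the values of $f^{(r)}$ inside a reset block, and a combinatorial count that bounds how many $h$-values can arise between consecutive reset blocks.

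First I would verify, by direct substitution, that whenever $(u^{(r)}_{n},\ldots,u^{(r)}_{n+r})=(0,1,\ldots,r-1,0)$, one has
\[
    f^{(r)}(n)=f^{(r)}(n+1)=\cdots=f^{(r)}(n+r-1), \qquad f^{(r)}(n+r)=\Bigl(\sum_{j=1}^{r}a_{j}\Bigr) f^{(r)}(n).
\]
Indeed, for each $k\in\{1,\ldots,r-1\}$ the recurrence at position $n+k$ (with shift $u^{(r)}_{n+k}=k$) reduces to $\sum_{j=1}^{r}a_{j}f^{(r)}(n-j)$, exactly as at position $n$; the value at $n+r$ is then a linear combination of $r$ equal terms. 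Consequently $h(n_i)=\cdots=h(n_i+r-2)=1$ and $h(n_i+r-1)=\sum_{j=1}^{r}a_{j}$ at every reset index $n_i$, and the chosen initial conditions $f^{(r)}(0)=\cdots=f^{(r)}(r-1)=1$, $f^{(r)}(r)=\sum_{j=1}^{r}a_{j}$ are precisely what is needed for $n_0=0$ to behave like a reset point.

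For the counting step, for each $i$ and each $k\in\{0,\ldots,n_{i+1}-n_{i}-r-1\}\subseteq\{0,\ldots,c-r-1\}$, I would argue that $h(n_{i}+r+k)$, viewed as a rational function of $a_{1},\ldots,a_{r}$, depends only on the (fixed) reset configuration together with the block $(u^{(r)}_{n_{i}+r+1},\ldots,u^{(r)}_{n_{i}+r+k+1})$ of length $k+1$. Hence $h(n_{i}+r+k)$ attains at most $r^{k+1}$ values, and adding the two reset-forced values we obtain
\[
    \#\mathcal{V}(f^{(r)}) \;\leq\; 2 + \sum_{k=0}^{c-r-1} r^{k+1} \;=\; 1 + \frac{r^{c-r+1}-1}{r-1}.
\]

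The main obstacle is justifying the assertion in the counting step that $h(n_{i}+r+k)$ is determined by only $k+1$ subsequent $\mathbf{u}^{(r)}$-entries. For $r=2$ every lookback triggered from positions in $[n_{i}+r,n_{i+1})$ stays inside the reset block $[n_{i},n_{i}+r]$, so the claim is automatic. For $r\ge 3$ the extreme shift $u^{(r)}_{n_{i}+r+k+1}=r-1$ can pull $f^{(r)}$-values from indices $n_{i}-1,\ldots,n_{i}-r+2$ lying strictly before the reset block; one must then argue that such back-references can be rewritten using the relation $f^{(r)}(n_{i})=\sum_{j=1}^{r}a_{j}f^{(r)}(n_{i}-j)$ at the reset position together with the forced equalities among the post-reset values, so that their dependence on the history of $\mathbf{u}^{(r)}$ cancels out at the level of ratios. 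Performing this bookkeeping is exactly the content that the statement leaves to the reader, and it is also what makes the order-$r$ case genuinely more delicate than the clean order-$2$ argument of Proposition \ref{main2}.
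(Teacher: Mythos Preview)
Your reset computation is correct and is exactly the analogy with Proposition~\ref{main2} that the paper intends (the paper offers nothing beyond ``analogous and left to the reader''): the block $(0,1,\ldots,r-1,0)$ at $n_i$ forces $h(n_i)=\cdots=h(n_i+r-2)=1$ and $h(n_i+r-1)=\sum_{j}a_j$, and your arithmetic $2+\sum_{k=0}^{c-r-1}r^{k+1}=1+\frac{r^{c-r+1}-1}{r-1}$ is right.

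The obstacle you isolate in the final paragraph is, however, a genuine gap, and the remedy you sketch cannot close it. The recurrence for $h$ has look-back depth $2r-2$, while the reset block pins down only $r$ consecutive $h$-values; for $r\ge 3$ this leaves $r-2$ carry-over ratios $h(n_i-1),\ldots,h(n_i-r+2)$ undetermined, and the single relation $f^{(r)}(n_i)=\sum_j a_j f^{(r)}(n_i-j)$ only trades $f^{(r)}(n_i-1)$ for the still deeper values $f^{(r)}(n_i-2),\ldots,f^{(r)}(n_i-r)$. In fact the statement itself appears to fail for $r\ge 3$. Take $r=3$, $a_1=a_2=a_3=1$, and (after a short initial run of zeros to avoid negative indices) let $\mathbf{u}^{(3)}$ be the periodic word $(0,1,2,0,2)^\infty$, so that reset blocks lie in arithmetic progression with common difference $c=5$. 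At each step one has $u^{(3)}_{n_i+4}=2$, hence $f^{(3)}(n_i+4)=2f^{(3)}(n_i)+f^{(3)}(n_i-1)$ and $f^{(3)}(n_i+5)=6f^{(3)}(n_i)+f^{(3)}(n_i-1)$; writing $y_i=f^{(3)}(n_i-1)/f^{(3)}(n_i)$ one obtains $y_{i+1}=(y_i+2)/(y_i+6)$. This M\"obius map has eigenvalue ratio $(7+\sqrt{33})/(7-\sqrt{33})$, not a root of unity, and its fixed points are irrational, so the rational orbit $\{y_i\}$ is infinite. Since $h(n_i+3)=(2+y_i)/3$, the set $\mathcal{V}(f^{(3)})$ is infinite, contradicting the claimed bound $1+\tfrac{3^{3}-1}{2}=14$. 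A correct order-$r$ statement would require a reset pattern that determines $2r-1$ consecutive $f^{(r)}$-values rather than $r+1$.
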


\section*{Acknowledegements}
The research of the authors was supported by a grant of the National Science Centre (NCN), Poland, no. UMO-2019/34/E/ST1/00094.

We are grateful to the referee for many useful comments and suggestions.

\end{document}